\documentclass[12pt,reqno]{amsart}
\usepackage{amsmath, amssymb, esint, cite}
\usepackage[margin=1in]{geometry}
\usepackage{amsmath}
\usepackage{amsthm}
\usepackage{fullpage,amsbsy,url,enumerate,comment,colonequals}
\usepackage{parskip}

\usepackage{bbm}

\setlength\parindent{0.5cm}

\usepackage[pdftex]{graphicx,color}
\pdfoutput=1

\addtolength{\oddsidemargin}{-.0in}
\addtolength{\evensidemargin}{-.0in} \addtolength{\textwidth}{.0in}

\newtheorem{theorem}{Theorem}[section]
\newtheorem{prop}[theorem]{Proposition}
\newtheorem{lemma}[theorem]{Lemma}


\newtheorem{conjecture}[theorem]{Conjecture}

\newtheorem{question}[theorem]{Question}

\def\R{\mathbb{R}}

\def\G{\Gamma}

\def\O{\Omega}

\def\a{\alpha}
\def\b{\beta}

\def\de{\partial}

\def\f{\varphi}

\newcommand{\eps}{\epsilon}


\title{The Two Hyperplane Conjecture}
\date{}
\author{David Jerison}
\address{David Jerison, Department of Mathematics,  Room 2-272,
Massachusetts Institute of Technology, Cambridge, MA 02139, USA}
\email{jerison@math.mit.edu}

\thanks{Supported in part by NSF Grant DMS 1500771, a Simons Fellowship, and Simons Foundation grant 
(601948, DJ)}
\subjclass[2010]{35B35,35A15}
\keywords{minimal surfaces, isoperimetric, elliptic variational problems, hot
spots conjecture}

\numberwithin{equation}{section}

\begin{document}

\begin{abstract}
We  introduce a conjecture that we call the {\it Two Hyperplane
Conjecture}, saying that an isoperimetric surface that divides
a convex body in half by volume is trapped between 
parallel hyperplanes.   The conjecture is motivated by an
approach we propose to the {\it Hots Spots Conjecture} of J. Rauch
using deformation and Lipschitz bounds for 
level sets of eigenfunctions.  We will relate this approach
to quantitative connectivity properties of level sets of 
solutions to elliptic variational problems, including 
isoperimetric inequalities, Poincar\'e inequalities, Harnack inequalities, 
and NTA (non-tangentially 
accessibility).  This paper mostly asks questions rather than answering them,
while recasting known results in a new light. 
Its  main theme is that the level 
sets of least energy solutions to scalar variational problems
should be  as simple as possible.

\end{abstract}

\maketitle

\section{Introduction}

This paper is dedicated to Carlos Kenig.   Every one of this author's collaborations 
with Carlos was a joy.   Early in our careers we collaborated on a paper \cite{JK}  
in which we introduced a quantitative,  scale-invariant form of connectivity of
domains which we called the NTA (non-tangentially accessible) property.   
We describe here how several such quantitative 
scale-invariant connectivity properties are related and how they 
can be used as a  tool in nonlinear analysis.  
Although in our original paper, the NTA property was
a hypothesis used to prove a boundary Harnack estimate for
a linear operator, already in 1986,  Aguilera, Caffarelli and 
Spruck \cite{ACS} proved an NTA property for free boundaries. In 2011, De Silva
and Jerison \cite{DSJ} showed that the boundary Harnack estimate
can serve as a stepping stone towards full regularity of free boundaries.     
In \cite{DSJ}, we explained the parallel roles played by the boundary Harnack estimate 
for free boundaries and a  Harnack inequality on area minimizing  surfaces
due to Bombieri and Giusti \cite{BG}.   The present paper elaborates on that
analogy.

The theme of this paper is that level sets of solutions to 
linear and semilinear elliptic equations
and singular limits of these surfaces, which are area minimizing
surfaces, isoperimetric surfaces, and free boundaries, should be as
simple and as regular as possible, even in high dimensions.   We will illustrate
what we mean starting with a version of the hot spots conjecture.
We will then
broaden the discussion to include other problems in the calculus of variations.
This will lead us to raise many more questions related 
to quantitative forms of connectivity.  We expect that positive answers to
these questions will ultimately overcome the barrier represented by the Simons 
cone in dimension 8 to conjectures of De Giorgi type in the
case of isoperimetric hypersurfaces in convex domains
as opposed to area-minimizing surfaces.  
The initial steps concerning connectivity
have already been taken by Bombieri and Giusti and in work using
stability by Sternberg and Zumbrun \cite{StZ1,StZ2,StZ3}, as well as 
the work of  by Rosales et al. \cite{RCBM}.   Our questions are also motivated by high dimensional convex geometry.

The paper is organized as follows.  In Section 2 and 3 we state a version
of the hot spots conjecture in all dimensions and propose an avenue to resolving
it using Lipschitz bounds on level sets and deformation.  In Section 4 we describe 
how a Harnack inequality can be used to prove related Lipschitz bounds,
not quite the ones we want.   In Section
5 we describe how Harnack inequalities are related to other forms of
quantitative connectivity.  In Section 6, we describe the large scale,
global estimates that are the most fundamental reflection of connectivity, including
what we are calling the two hyperplane conjecture.   Very roughly speaking,
these conjectures are an attempt to quantify the extent to which isoperimetric
subsets of convex sets are convex.    We expect that some version
of such bounds is what is needed to complete our program to prove
Lipschitz and higher regularity of level sets in all dimensions.  Finally, in Section 7 we give a short
list of known results that represent modest qualitative evidence in favor
of some of our conjectures.   

We thank Guy David for discussions over many years and
permission to mention our ongoing work.  We thank Emanuel Milman
for the proof, included here, that our two hyperplane conjecture implies the 
KLS Hyperplane Conjecture.   We also thank Ronen Eldan, 
Larry Guth, Bo'az Klartag, and Frank Morgan for helpful conversations.

\section{The Hot Spots Conjecture and Lipschitz Level Sets}

As initially formulated, the {\it Hot Spots Conjecture} of Jeff Rauch \cite{R}
says that, generically, 
the hottest point of a perfectly insulated domain
tends to the boundary as time tends to infinity. This poetic description is a disguise for 
a somewhat more precise formulation that for a generic bounded domain in Euclidean
space, the maximum (and minimum) of a least energy non-constant Neumann eigenfunction is  achieved on the boundary.   

The hot spots conjecture is quite far from being resolved.  
Burdzy and Werner showed 
that the conjecture is false for certain multiply-connected planar regions, and
Burdzy constructed a 1-connected counterexample in \cite{Bur}.   Some partial results
can be found in work by Kawohl, Ba\~nuelos-Burdzy, Jerison-Nadirashvili, Atar-Burdzy, Miyamoto, and Siudeja.
 A proof for acute triangles was announced only a few months ago by Judge and Mondal \cite{JM} where references to this earlier work can be found.  
Despite the resistance of this conjecture, we believe it for simply-connected planar domains and for convex domains in all dimensions.

Our preferred version of the hot spots conjecture concerns centrally symmetric
convex bodies.  We denote by $-\O$ the set $\{-x: x\in \O\}$, and we say that
a convex set $\O$ is symmetric if $-\O = \O$.   

\begin{conjecture} \label{conj:hotspots}  (version of the hot spots conjecture of
J. Rauch) \  Suppose  $\O\subset \R^n$ is a bounded, open, convex, 
and $-\O = \O$.  If $u$ is a Neumann eigenfunction with the least
non-zero eigenvalue, then there is a direction $a\in \R^n$, $a \neq 0$, 
in which $u$ is strictly monotone:
\[
a \cdot \nabla u(x) > 0 \quad \mbox{for all} \quad x\in \overline{\O} \quad \mbox{such that} \quad
|a\cdot x| < \max_{y\in \overline\O} a\cdot y.
\]
Hence, $u$ achieves its maximum and minimum on $\de \O$
on some subset of the points where $a\cdot x$ achieves its maximum and mininum,
respectively.
\end{conjecture}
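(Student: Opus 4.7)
The plan is to exploit central symmetry to single out an odd first eigenfunction, use the Two Hyperplane Conjecture to locate its nodal set inside a slab, and then propagate the slab condition to every other level set through Lipschitz bounds and deformation, so that strict monotonicity in the slab direction follows.

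First, since $-\Omega=\Omega$, the Neumann Laplacian commutes with the reflection $x\mapsto -x$, so each eigenspace splits into even and odd parts. I would begin by showing that the least nonzero Neumann eigenvalue $\lambda_1$ is attained by an odd eigenfunction $u$. A natural path is to minimize the Rayleigh quotient over the odd (hence automatically mean-zero) subspace and to rule out the even mean-zero competitor, either via strict convexity arguments or by a symmetrization/deformation that strictly decreases energy. Fix such a $u$, and let $N=\{u=0\}\subset\overline\Omega$ denote its nodal set, which contains the origin and splits $\Omega$ into two congruent halves $\Omega^{\pm}=-\Omega^{\mp}$ of equal volume.

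Second, I would apply the Two Hyperplane Conjecture, or a variational analogue adapted to eigenfunctions. Although $N$ is a critical surface for the Dirichlet energy under a volume constraint rather than an isoperimetric surface in the strict sense, a singular-perturbation comparison with Allen--Cahn-type approximations $-\Delta u_\eps+\eps^{-2}W'(u_\eps)=0$ should place $N$ inside a slab $\{|a\cdot x|\le\delta\}$ for some $a\in\R^n\setminus\{0\}$, with $\delta$ controlled by the diameter of $\Omega$ and by $\lambda_1$. I would then pass to the nonzero level sets $\{u=t\}$: the Harnack-type and NTA-type estimates described in Sections~4--5 should show that each such level set is Lipschitz in the $a$-direction on the scale of the slab, and a continuity-in-$t$ argument, based on the flow along $\nabla u/|\nabla u|^2$, should propagate the slab inclusion outward to all $t$. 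Once every level set is a Lipschitz graph in direction $a$, one obtains $a\cdot\nabla u\ge 0$, upgraded to $a\cdot\nabla u>0$ off the extremal support hyperplanes by the Hopf lemma and unique continuation.

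The main obstacle is twofold. The first hurdle is the Two Hyperplane Conjecture itself, whose status in high dimensions is entangled with the structure of singular isoperimetric surfaces and the Simons-cone phenomenon; any substitute derived directly for eigenfunctions rather than area minimizers will have to bypass the very same difficulty. The second, equally delicate hurdle is converting a single slab enclosure of the nodal set into uniform Lipschitz control on every level set: the connectivity estimates the paper advocates are the correct framework, but keeping the Harnack and NTA constants uniform as $t\to \pm\|u\|_\infty$, near the extremal support hyperplanes where $\nabla u$ degenerates, will require genuinely new boundary regularity input.
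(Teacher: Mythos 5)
The statement you are addressing is a \emph{conjecture} in the paper: the author explicitly states that no proof or complete pathway is offered, so there is no paper proof to compare your attempt against. What the paper does offer is a proposed strategy (Section 3 and the surrounding discussion), and your proposal resembles that strategy in spirit but departs from it in one essential respect and omits one ingredient the paper highlights as unavoidable.

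The paper's plan is a method of continuity in the space of domains (or log-concave measures): deform $\Omega$ continuously from a ball, where the level sets are explicit, to the given body, and show that the Lipschitz-graph property of the \emph{nodal set} is both open and closed along the deformation. The two-hyperplane conjecture is the large-scale input (the paper's Step~2) that would supply the $L^\infty$/cone bound feeding the ``flat implies Lipschitz'' gradient estimate of Step~3. Your plan invokes the two-hyperplane conjecture for the same purpose, but then substitutes a different deformation: a continuation in the level value $t$, flowing along $\nabla u/|\nabla u|^2$ outward from the nodal set. That is a genuinely different route. You also add a preliminary reduction --- proving that the least nonzero Neumann eigenvalue has an odd eigenfunction --- which the paper does not require (though it is close in spirit to the Klartag symmetry-breaking result cited there); and you propose transferring the two-hyperplane conjecture from isoperimetric interfaces to eigenfunction nodal sets via an Allen--Cahn singular limit, which the paper uses only as a motivating analogy, not as an established transfer mechanism. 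This transfer would itself need a proof.

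The concrete gap is in the step from ``every level set is a Lipschitz graph'' to $a\cdot\nabla u\ge 0$. The paper explicitly warns that no uniform Lipschitz constant can hold near $\max u$ and $\min u$: at the rounded ends of a strictly convex body the slope of the level sets must blow up, which is exactly why Conjecture~\ref{conj:nodal} is stated for the nodal set only, and why the paper proposes that near the extrema the stable property to propagate is \emph{convexity} of the super-/sub-level sets rather than any Lipschitz graph bound. Your $t$-continuation scheme has the Lipschitz constant implicitly uniform in $t$, which fails. You do flag the degeneracy as an obstacle, but as written the plan has no mechanism for it; it would need to be replaced, near the endpoints, by the convexity-stability argument the paper sketches, or by some other degenerate-regime estimate. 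Until that is supplied, the chain from a slab for the nodal set to monotonicity of $u$ does not close.
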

This conjecture says that the level sets are as simple
as possible.  For example, if the domain is smooth
and strictly convex, then there are exactly two critical
points in $\overline \O$.  The direction $a$ must be the direction from the unique
minimum to the unique maximum, and, taking $a$ as the vertical
direction, every intermediate level set is a smooth graph.  In particular, each interior level set has the
trivial topology of an embedded $(n-1)$-disk bounded by an $(n-2)$-sphere in the boundary.

The author and N. Nadirashvili proved  Conjecture \ref{conj:hotspots} 
for convex planar domains with two axes of symmetry in \cite{JN} by the method of continuity.   
Atar and Burdzy \cite{AB} proved monotonicity of the lowest energy
nonconstant Neumann eigenfunction
in planar domains bounded by two Lipschitz graphs with Lipschitz constant
less than or equal to $1$ using probabilistic arguments.  They call such
domains ``lip" domains.   

Without the symmetry $-\O = \O$, the eigenfunctions can fail
to be monotone.  For example,
in the case of a narrow circular sector, the extrema occur at the vertex
and  on the  
entire arc of the circle, and one sees easily that there are convex perturbations
of the circular arc for which the critical point structure on the boundary is
complicated. 
Indeed, this complexity is already present in acute triangles
and is a reason  why the recent proof of the hot spots
conjecture in that case is subtle.  For many acute triangles, the least energy
non-constant Neumann eigenfunction is not strictly monotone in any direction
and has four
critical points on the boundary, namely, local maxima and minima at the vertices
and a saddle point on one side (see \cite{JM}).    
The rectangle is a borderline symmetric case showing
that the failure of convexity can give eigenfunctions that are not monotone
and whose level sets are more complicated. 
In a rectangle, the direction $a$ points parallel to the longer sides, and the
maximum and minimum are achieved on the entire  length of the shorter sides.   Because
the maximum and minimum are achieved on whole segments,
small perturbations of a rectangle can produce 
a complicated critical point structure of the eigenfunction.

We propose to use the method of continuity  to
prove Conjecture \ref{conj:hotspots}. 
The idea is to start from a symmetric convex body such as a ball, 
with explicit, simple level sets and deform it to any symmetric convex body 
while preserving the graph property.   I am indebted to Nikolai Nadirashvili
for teaching me about deformation as a method in the context
of level sets of eigenfunctions.
The particular implementation of the method proposed here requires a quantitative 
estimate, namely that
the level sets are Lipschitz graphs.  
In our setting the rounded ends
of convex bodies make it so that as the level set gets closer and
closer to the maximum or minimum, the Lipschitz constant will typically
tend to infinity.     For that reason, we formulate 
a quantitative conjecture for the nodal set only.
\begin{conjecture} \label{conj:nodal} \  There is a constant $C_n$ such
that if $\O$ is a bounded, open, convex, symmetric
subset of $\R^n$, and $u$ is a Neumann eigenfunction with the least
non-zero eigenvalue, then there is a Lipschitz function $\f:\R^{n-1}\to \R$ such
that in  suitable coordinates, 
\[
\{x\in \O: u(x) = 0\} = \{(x',x_n)\in \O: \R^{n-1}\times \R:  x_n = \f(x')\}, \quad |\nabla \f| \le c_n.
\]
\end{conjecture}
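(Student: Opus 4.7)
The plan is to combine Conjecture \ref{conj:hotspots}, the two hyperplane conjecture of Section 6, and a boundary-Harnack estimate with a method-of-continuity argument anchored at the ball. First I would exploit symmetry to reduce the claim to a pointwise gradient-ratio bound. For a symmetric convex body, when the second Neumann eigenvalue is simple the eigenfunction $u$ is odd, $u(-x)=-u(x)$, so the origin lies in the nodal set. Granting Conjecture \ref{conj:hotspots}, choose coordinates so that $a=e_n$ is the monotonicity direction; then $\partial_n u>0$ in $\O$, and the implicit function theorem identifies the nodal set with a graph $\{x_n=\varphi(x')\}$ satisfying
\[
\nabla\varphi(x') \;=\; -\frac{(\partial_1 u,\dots,\partial_{n-1}u)(x',\varphi(x'))}{\partial_n u(x',\varphi(x'))}.
\]
The target estimate $|\nabla\varphi|\le c_n$ is therefore the pointwise bound $|\partial_i u|\le c_n\,\partial_n u$ along the nodal set.

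Next I would run a continuity argument in the moduli space of bounded symmetric convex bodies normalized to John position, along a path $\O_t$, $t\in[0,1]$, from the unit ball $\O_0$ to the target $\O$. At $t=0$ the second eigenfunction is a coordinate function and the nodal set is the equator, with $\nabla\varphi=0$. Openness of the set of good $t$ follows from simplicity of $\lambda_2$ and elliptic regularity: the eigenpair, the monotonicity direction, and the nodal graph all depend smoothly on the domain near any configuration where a strict inequality $|\nabla\varphi_t|<c_n$ is in force.

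Closedness is the heart of the argument and requires a uniform a priori estimate. Here I would use the two hyperplane conjecture to trap the nodal set in a horizontal slab $S_t$ whose height is bounded by a dimensional multiple of the transverse diameter of $\O_t$. Each component $v_i:=\partial_i u$ solves the same linear eigenvalue equation with Neumann boundary data on $\partial\O_t$, and $v_n>0$ in the interior. Since the NTA constants of $\O_t$ are uniformly controlled in John position, the boundary-Harnack machinery of \cite{JK} applied to the quotient $v_i/v_n$, in the spirit of the free-boundary version in \cite{DSJ}, yields a H\"older bound on $v_i/v_n$ up to $\partial\O_t\cap S_t$. Combined with interior gradient control on the compact slab, this gives $|v_i|\le c_n v_n$ provided one has a non-degenerate lower bound for $v_n$ on $S_t$.

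The main obstacle is precisely that non-degeneracy estimate for $v_n$ in the bulk of the slab, which is not supplied by the two hyperplane conjecture alone. The natural attack is by contradiction and compactness: if along a sequence $(\O_k,u_k)$ in John position the ratio $|v_i|/v_n$ blew up at points $p_k$, a normalized blow-up at $p_k$ would, by Blaschke selection and elliptic compactness, produce a limiting symmetric convex domain carrying a monotone antisymmetric eigenfunction whose nodal graph is vertical at the origin, contradicting Conjecture \ref{conj:hotspots} at the limit. Thus the reduction of Conjecture \ref{conj:nodal} to Conjecture \ref{conj:hotspots} plus the two hyperplane conjecture plus boundary Harnack is conceptually clean; the outstanding difficulty is the slab-trapping and the non-degeneracy of $v_n$ inside it, which is exactly what the program of the rest of the paper aims to supply.
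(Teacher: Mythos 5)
The statement you were asked to prove is labeled \emph{Conjecture} \ref{conj:nodal} in the paper, and the paper offers no proof of it: Section~3 explicitly states ``In this paper we do not offer a proof or even a complete pathway to proving either of these conjectures.'' So there is no argument in the source to compare yours against. Your sketch is, in spirit, the program the paper advocates --- a method-of-continuity path from the ball, large-scale slab trapping in the style of the two-hyperplane conjecture, and a boundary-Harnack estimate on the ratio of directional derivatives to close the argument --- so it is faithful to the authors' intent. But it should be presented as a conditional roadmap, not a proof, and several of its links are not yet available.

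Three concrete gaps. First, the two-hyperplane conjecture (Conjecture \ref{conj:twohyp.infty}) is stated for \emph{isoperimetric} interfaces, not for nodal sets of Neumann eigenfunctions; you would need a separate eigenfunction analogue (which the paper never formulates) to justify the slab $S_t$. Second, the boundary-Harnack input from \cite{JK} is for positive \emph{harmonic} functions vanishing on the boundary of an NTA domain; here $v_i=\partial_i u$ satisfies $\Delta v_i+\lambda v_i=0$ with a Neumann-type condition on $\partial\Omega_t$, and $v_i$ is not signed, so the machinery must be rebuilt (as in \cite{DSJ} it had to be rebuilt for free boundaries) before a H\"older bound on $v_i/v_n$ up to $\partial\Omega_t$ can be asserted. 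Third, the compactness/blow-up step at the end does not work as stated: rescaling at points $p_k\neq 0$ destroys the central symmetry $-\Omega=\Omega$, and blow-ups of bounded convex bodies typically produce unbounded convex regions (half-spaces, slabs, cones), to which Conjecture \ref{conj:hotspots} does not apply. One also cannot assume simplicity of $\lambda_2$ along the path --- for the ball the eigenspace has dimension $n$ --- so openness via the implicit function theorem has to be argued with more care (e.g.\ via the odd sector of the symmetry). None of these are fatal to the \emph{idea}, which matches the paper's, but each is a genuine missing ingredient, and the paper itself identifies the large-scale trapping estimate (your slab step) as the hardest open point.
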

The hope is that this kind of Lipschitz bound is preserved under deformation. 
We believe that the deformation approach can also be used
to give a different proof of the Atar-Burdzy monotonicity theorem
for lip domains. (Their theorem applies to all level
sets, not just the nodal set.)  For convex domains, we expect that 
a different approach is required for level sets near the extrema.
The level sets near zero should have a uniform Lipschitz bound,
but near the maximum and minimum the slope of the level sets gets
larger and larger, and the property that we can hope is stable
is convexity.  In other words, one should
try to prove that the property super level sets 
$\{x\in \O: u(x) \ge c\}$ are convex  for $c$ sufficiently close to $\max u$
is stable under deformation, and similarly near the minimum.

In this paper we do not offer a proof or even a complete pathway to
proving either of these conjectures.   Instead, we suggest that to 
succeed with the approach by deformation, we need to address even more 
fundamental questions that are relevant to a large variety of problems in the calculus of variations.   Although the  eigenfunction solves a linear elliptic equation 
$\Delta u = -\lambda u$,  
its level sets depend in a decidedly nonlinear way on $u$.   Given
that linearity does not appear to be a central feature of the problem,
we will consider minimizers $u$ of functionals of the form
\[
\int_\O [|\nabla f|^2 + W(f) ] \, dx
\]
for which the Euler-Lagrange equation $2\Delta u = W'(u)$ is semilinear.
The common feature of these semilinear equations is a deformation
maximum principle (see Theorem 2.2 \cite{CS} and Proposition \ref{prop:weakmaxprin} below).   

In the case of a double well potential $W(u) = (1-u^2)^2$ the equation
$\Delta u = 2(u^3- u)$ is known as the Allen-Cahn equation.   As
shown by Modica and Mortola \cite{MM}, a scaled, singular limit of the functional 
is the area functional.   This leads us to the isoperimetric problem.  

A Borel set $E$ is called isoperimetric in $\O$ if it has 
least perimeter in $\O$ among  all Borel subsets of $\O$
with the same volume fraction $|E|/|\O|  = \a$ for some $0 < \a < 1$.
The (relative) perimeter of $E$ in $\O$, $P(E,\O)$, is defined as 
\[
P(E,\O) = \liminf_{\eps \to 0} \frac{|E_\eps \cap \O|-|E|}{\eps} \,  
\]
with $E_\eps$ the $\eps$-neighborhood of $E$ and $|\cdot |$ the Lebesgue measure.
Isoperimetric sets are only defined up
to a set of Lebesgue measure $0$.  But there is always an open representative
(see \cite{G}).  We will always use this representative, so that 
the topological boundary of $E$ is meaningful.  By the well
known regularity theory of de Giorgi \cite{G}, 
the perimeter of $E$ in $\O$ coincides with
the $(n-1)$-dimensional Hausdorff measure
of the topological boundary of $E$ 
minus the part of the boundary that coincides with $\de \O$, that is,
\[
P(E,\O) = H_{n-1}(\O \cap \de E).
\]
We emphasize that $\O$ is open, so that the portion $(\de \O) \cap (\de E)$ is
not counted in this relative perimeter.

 In \cite{Mod}, Modica proved that 
if $u_\eps$ minimizes the constrained problem 
\[
\int_\O [\eps |\nabla f|^2 + (1-f^2)^2] \, dx, \quad \int_\O f \, dx   = 0,
\]
then there is a subsequences of $u_\eps$ converging in $L^1(\O)$ as $\eps \to 0$
to a function $u_0$,
and there are open sets $E_\pm\subset \O$, $\O = E_+\cup E_-$,  $|E_\pm| = |\O|/2$,
such that $u_0 = 1_{E_+} - 1_{E_-}$ almost everywhere, and
the interface $\de E_+ \cap \de E_-$ has least $H_{n-1}$ measure
among  all partitions of $\O$ into sets $E_\pm$ of equal volume.
Thus, both $E_\pm$  are isoperimetric with $\a = 1/2$. (We  can obtain any
 volume fraction $\a$ by changing the constraint on the integral of $f$ over
 $\O$ to another constant.)

To make the analogy with eigenfunctions more explicit, recall that the isoperimetric inequality
\[
\left(\int_\O |f(x)|^{n/(n-1)} \, dx\right)^{(n-1)/n} 
\le C_* \int_\O |\nabla f| \, dx, \quad \int_\O f(x) \, dx = 0,
\]
is valid with best constant $C_* =  |\O|^{(n-1)/n} /2 H_{n-1}(\de E_+ \cap \de E_-)$
and one extremal is $u_0(x) = 1_{E_+}- 1_{E_-}$ (interpreting the right
side integral by duality, in the sense used for functions of bounded variation; see \cite{G}).  
The Euler-Lagrange equation for this constrained problem is that the interface has a constant mean curvature and meets the boundary of $\O$ perpendicularly. 
Likewise, extremals for the Poincar\'e inequality
\[
\int_\O |f(x)|^2 \, dx \le (1/\lambda) \int_\O |\nabla f|^2 \, dx, \quad \int_\O f(x) \, dx = 0,
\]
are Neumann eigenfunctions of $\O$ with the smallest nonzero eigenvalue  $\lambda$,
and the level sets of these eigenfunctions meet the boundary of $\O$ perpendicularly.

In analogy with Conjecture \ref{conj:nodal} for nodal sets, we have the following
conjecture for isoperimetric sets. 
\begin{conjecture} \label{conj:Lip}
If $\O$ is a bounded, convex domain in $\R^n$,
then the boundary $\O\cap \de E$ of every open isoperimetric subset 
$E$ of $\O$ 
is a Lipschitz graph.
\end{conjecture}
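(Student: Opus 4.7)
The plan is to bootstrap the Two Hyperplane Conjecture of Section 6 to the graph statement by combining it with the second-variation stability of isoperimetric sets in convex bodies, the perpendicular meeting of $\de E$ with $\de \O$, and a blow-up/compactness argument. The rough picture is that ``trapped in a thin slab'' plus ``constant mean curvature meeting $\de \O$ orthogonally'' plus ``stable'' should force the interface $\O \cap \de E$ to be a Lipschitz graph over the hyperplane perpendicular to the slab direction.

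First, I would invoke the Two Hyperplane Conjecture to obtain a unit vector $a\in \R^n$ such that $\O \cap \de E \subset \{\a_1 \le a \cdot x \le \a_2\}$; this $a$ is to be the ``vertical'' direction for the graph. To show the graph is single-valued, I would study the scalar $\psi := a \cdot \nu$ on $\O \cap \de E$, where $\nu$ is the outward unit normal of $E$. Because $a$ generates an ambient isometry (translation), the normal variation $\psi\nu$ is admissible and $\psi$ is automatically a Jacobi field for the CMC equation, satisfying the natural contact-angle boundary condition along $(\de\O) \cap \overline{\de E}$ coming from the orthogonal meeting. In the spirit of Sternberg--Zumbrun \cite{StZ1,StZ2,StZ3}, I expect stability together with the convexity of $\O$ to force $\psi$ to have constant sign on each connected component of $\O\cap \de E$: a sign change could be promoted, via truncation of $\psi_+$ against $\psi_-$, to a volume-preserving deformation with strictly negative second variation. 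Since $\de E$ separates $\O$ into $E$ and $\O\setminus E$, and $\psi > 0$ means that every transverse crossing of a vertical line with $\de E$ has the same orientation, one concludes that each line $\{x' + t a : t\in \R\}$ meets $\O \cap \de E$ in at most one point, which is the graph property.

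For the uniform Lipschitz bound $|\nabla \f| \le C_n$ I would argue by compactness. If the slope blew up along a sequence $(\O_k, E_k)$, then after recentering and rescaling one extracts an area-minimizing tangent cone $C$ trapped between parallel hyperplanes, with its normal somewhere perpendicular to $a$. A Bernstein-type rigidity would then classify $C$ as a hyperplane, contradicting the divergent slope. This last step is where the main obstacle lives: in ambient dimension $n \ge 8$ the Simons cone and its relatives furnish nonflat stable minimal cones, so the argument cannot rest on local area-minimality alone, and the slab trapping from Section 6 must be used quantitatively strongly enough to exclude every nonflat stable cone. A secondary difficulty is extending the interior Lipschitz bound up to the boundary of the projection of $\O$, which I expect to follow from the convexity of $\O$, the orthogonal meeting condition, and standard boundary regularity for CMC surfaces.
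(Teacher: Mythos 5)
The statement you are addressing is a conjecture; the paper offers no proof of it. What the paper does provide (Section 3) is a proposed pathway by the method of continuity, which the author explicitly flags as having ``substantial gaps'': deform from a known configuration, establish a large-scale slab/cone bound (Step 2, which is where the material of Section 6 enters), and prove a ``flat implies Lipschitz'' gradient bound (Step 3) to run an open--closed argument. Your proposal takes a genuinely different route: rather than deforming, you try to read off graphicality directly from the sign of $\psi = a\cdot\nu$ via stability, and then get the uniform Lipschitz constant by a blow-up/compactness argument. If it worked, this would be a shortcut; but the central step does not close.

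The key gap is in the sign argument for $\psi$. Translation in direction $a$ is \emph{not} a symmetry of the constrained problem in $\O$: it does not preserve $\O$. Consequently $\psi = a\cdot\nu$ solves the interior Jacobi equation $\Delta_S\psi + \|A_S\|^2\psi = 0$ on $S = \O\cap\de E$, but it does \emph{not} satisfy the Robin boundary condition $\de_\eta\psi = A_{\de\O}(\nu_S,\nu_S)\,\psi$ on $\overline S\cap\de\O$ that is encoded in the Sternberg--Zumbrun second variation \eqref{eq:poincare-SZ}. When you test stability with a volume-balanced combination of $\psi_+$ and $\psi_-$, integration by parts leaves an uncontrolled boundary term $\int_{\de S}\psi\bigl(\de_\eta\psi - A_{\de\O}(\nu_S,\nu_S)\psi\bigr)$; without controlling it you cannot conclude the second variation is negative. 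The ``zero boundary term'' trick you are invoking works for ambient Killing fields of the full problem, and that is precisely what is absent here — closing this boundary term is essentially the heart of the conjecture. A secondary issue within the same step: even granting constant sign on each component of $S$, you would still need $S$ to be connected (or all components to carry the same sign) to get a single-valued graph, and the Sternberg--Zumbrun connectivity theorem requires strict convexity of $\O$, which the conjecture does not assume. Finally, you correctly flag the remaining gaps yourself — the Bernstein-type rigidity in the compactness step fails for stable minimal cones once $n\ge 8$, and the whole argument is conditional on the Two Hyperplane Conjecture, which the paper treats as a separate open problem and nowhere claims implies Conjecture~\ref{conj:Lip}.
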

Notice that in the isoperimetric version,  
we have dropped the symmetry assumption on $\O$ and
we are making this conjecture for all volume fractions, not just $\a = 1/2$. 
For small volume fraction, isoperimetric subsets are close to
the intersection of $\O$ with a ball centered on the boundary.  Thus the
behavior for small isoperimetric sets is 
simpler and better than the behavior we see for level sets of eigenfunctions
near the maximum and minimum (see the discussion above for triangles, for instance).

By standard regularity theory of constant
mean curvature surfaces, Conjecture \ref{conj:Lip}  implies that the isoperimetric
hypersurface $\O\cap \partial E$ is smooth.  Thus, if the conjecture is
true, there is a sharp contrast between local regularity of solutions
of the isoperimetric problem, a Neumann problem with a single scalar
constraint,
and the Dirichlet type problem in which one prescribes boundary values
for area minimizing surfaces. The first
non-smooth area minimizing surface that was discovered is the Simons cone, 
\[
S = \{x\in \R^{2n}: x_1^2 + \cdots + x_n^2 = x_{n+1}^2+ \cdots + x_{2n}^2\}, \quad 2n \ge 8.
\]
In dimension $8$ and higher, 
$S$  has the least area in the unit ball $B$ centered at the origin,
among all surfaces in $B$ with the same boundary $S\cap \partial B$.
It also happens that $S$ divides the convex set $B$ in half by volume, but
$S$ is not an isoperimetric set.  The 
set with least area that divides the ball in half by volume 
is a bisecting hyperplane. We argue here that the complexity
of the Simons cone is dictated by its boundary $ S\cap \de B = S^{n-1} \times S^{n-1}$.
In the isoperimetric problem this boundary is allowed to move, and
$S$ is not stable for any $n$, as  demonstrated by Sternberg and Zumbrun 
(see the last section of this paper).

To test our conjectures for large $n$,  it's natural,
in the spirit of high dimensional convex geometry, 
 to generalize them to the case of log-concave densities.
Denote by $\mu = w(x) dx$ a probability measure on $\R^n$ with log-concave density 
$w$,
that is, $V:\R^n \to \R$ is convex, and 
\[
w(x) = e^{-V(x)}, \quad \int_{\R^n} w(x) \, dx  = 1.
\]
We say that $\mu$ is strictly log-concave if $V$ is strictly convex. 
The perimeter of an Borel set $E$ relative to $\mu$ is defined
as 
\[
P_\mu(E) : = \, \liminf_{\eps \to 0+} \, \frac{\mu(E_\eps) - \mu(E)}{\eps} \, .
\]
An isoperimetric set $E\subset \R^n$ is an Borel set such that 
\[
\mu(E) = \a,  \quad P_\mu(E) \le P_\mu(F)
\]
for every Borel set $F$ satisfying the volume constraint $\mu(F) = \a$.
As before, $E$ is equivalent up to a set of Lebesgue measure zero
to an open set, so we may restrict our attention to open sets $E$.  The open representative is sufficiently regular that 
\[
P_\mu(E) = \int_{\de E} w \, dH_{n-1}
\]
If we allow $V$ to take the value $+\infty$,
open, bounded convex domains can be viewed as the special
case $w(x) = 1_\O$ with $V(x) = 0$ for $x\in \O$ and
$V(x) = \infty$ for $x\notin\O$.

\begin{conjecture} \label{conj:logconcave.Lip}  There is a constant $C= C(\a,n)$
such that for every strictly log-concave probability measure $\mu = e^{-V}dx$  
on $\R^n$
and every open isoperimetric subset $E$ with $\mu(E) = \a$ such that
after suitable rotation, $\de E$ is a Lipschitz graph:
\[
\de E = \{(x',x_n)\in \R^{n-1}\times \R:  x_n = \f(x')\}, \quad |\nabla \f(x')| \le C(\a,n),
\]
at least for some suitable growth of $V$ that includes
the case of bounded convex domains $1_\O dx$ in the limit.
\end{conjecture}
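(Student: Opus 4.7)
The plan is to reduce the Lipschitz graph assertion to three inputs: (i) smoothness outside a low-codimension singular set, (ii) a global slab confinement in the spirit of the two hyperplane conjecture, adapted to the log-concave measure $\mu = e^{-V}dx$, and (iii) a uniqueness statement that rules out vertical tangencies and multiple intersections. The Euler--Lagrange condition for $\de E$ is the weighted constant mean curvature equation
\[
H(x) - \langle \nabla V(x), \nu(x)\rangle = \lambda \qquad \text{on } \de E,
\]
and the stability inequality of Sternberg--Zumbrun reads
\[
\int_{\de E} \bigl(|\nabla_T \f|^2 - |A|^2 \f^2 - \mathrm{Hess}(V)(\nu,\nu)\, \f^2 \bigr) \, w \, dH_{n-1} \ge 0
\]
for all $\f$ with $\int_{\de E} \f \, w \, dH_{n-1} = 0$.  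Strict log-concavity $\mathrm{Hess}(V)\ge cI$ turns this into an effective spectral gap that I would use as the main analytic hammer.

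First, I would invoke De Giorgi--Federer regularity for $\Lambda$-minimizers of perimeter in a smooth metric to get that $\de E$ is a smooth hypersurface away from a closed singular set $\Sigma$ of Hausdorff dimension at most $n-8$, and that the surface is quasi-minimal so blow-ups at any point are area-minimizing cones in $\R^n$. Next, I would attempt to promote the (conjectural) two hyperplane bound of Section 6 to the log-concave setting by a Brunn--Minkowski/localization argument of Kannan--Lov\'asz--Simonovits type: slice $\mu$ into one-dimensional log-concave needles along hyperplane sections, extract a preferred direction $e_n$ from the KLS concentration of $\mu$ in that needle decomposition, and show that $\de E$ is contained in a slab $\{a \le x_n \le b\}$ whose width $b-a$ is controlled by $C(\a,n)$. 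This uses the strict log-concavity through the dimensional Poincar\'e inequality for $\mu$.

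With slab confinement in hand, I would next show that $\de E$ is connected and is a graph over the hyperplane $\{x_n = 0\}$. Connectedness should follow by the stability inequality applied to a test function that vanishes on one component but has nonzero weighted integral, combined with strict log-concavity; any split of $\de E$ would produce a forbidden eigenfunction of the Jacobi operator. To rule out vertical tangent planes I would use a moving planes argument: slide the hyperplanes $\{x_n = t\}$ from $t = a$ and show they intersect $\de E$ transversally until they touch themselves, arguing by the strong maximum principle for the linearization (a Bakry--\'Emery operator with positive curvature term $\mathrm{Hess}(V)\ge cI$). Transversality everywhere on the smooth part gives the graph structure $x_n = \f(x')$ with $|\nabla \f|<\infty$ pointwise. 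Quantifying this, the bound on $|\nabla \f|$ should come from combining the slab width and the global area bound $P_\mu(E)\le P_\mu(\text{half-space})$ with a Harnack-type inequality on the surface in the spirit of Bombieri--Giusti.

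The main obstacle is the singular set $\Sigma$: the moving plane and graph arguments must be closed across possible Simons-type singularities, and producing a quantitative $C(\a,n)$ rather than a qualitative Lipschitz constant requires ruling out degeneration of tangent cones into an area-minimizing cone that happens to be stable in the weighted sense. In the area-minimizing problem the Simons cone is the obstruction in dimension $8$, but as Sternberg--Zumbrun observed it is never stable as an isoperimetric surface under a strictly log-concave perturbation; I would try to use this instability quantitatively to show that any blow-up cone must be a hyperplane, thereby eliminating $\Sigma$ entirely. Making the instability quantitative, and controlling the Lipschitz constant uniformly as $\a\to 0$ or $\a \to 1$ and as the log-concave density degenerates toward $1_\O$, is where I expect the real work to lie, and it is precisely the place where the full strength of the two hyperplane conjecture appears indispensable.
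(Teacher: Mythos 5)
The statement you were asked to prove is Conjecture~\ref{conj:logconcave.Lip}: the paper does not prove it and explicitly presents it as open. What the paper does offer, in Section~3, is a three-step ``tentative plan'' by the method of continuity, which the author flags as having ``substantial gaps'' — in particular Step~2 (a slab/cone confinement, the content of the two hyperplane conjecture of Section~6) and Step~3 (an a priori gradient bound for \emph{stable} isoperimetric surfaces, analogous to Bombieri--De Giorgi--Miranda). Your proposal is essentially a paraphrase of that same roadmap, and it leaves those same two gaps unclosed: the slab confinement and the Bombieri--Giusti-type Harnack argument giving $|\nabla\f|\le C$ are exactly Steps~2 and~3, and neither is established. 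Presenting them as ``inputs'' while acknowledging at the end that ``the full strength of the two hyperplane conjecture appears indispensable'' is honest, but it means you have reduced one open conjecture to another (plus a further open gradient bound), not proved anything.

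There are also specific points in your sketch that would not work as written. Your attempt to deduce the slab confinement from a KLS-style needle decomposition is backwards relative to the logic of the paper: Proposition~\ref{prop:twoimpliesone} (Milman) shows that the two hyperplane conjecture \emph{implies} the KLS hyperplane conjecture; there is no known derivation in the other direction, and since KLS itself is open you cannot use it as a lemma. Your moving-planes step does not actually yield the graph property: first touching of a level plane with $\de E$ controls symmetry or a one-sided comparison, but to get $\de E = \{x_n = \f(x')\}$ you need to show each vertical line meets $\de E$ in exactly one point, which is a global topological statement not produced by a local strong maximum principle (this is the role of the deformation/continuity argument in the paper's Step~2--3, where the Lipschitz bound is propagated and openness/closedness of the set of parameters with the graph property is argued). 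Finally, eliminating the singular set $\Sigma$ by ``making the Sternberg--Zumbrun instability quantitative'' at blow-up cones is plausible as a heuristic, but no such quantitative instability is proved anywhere, so this is a third unestablished ingredient rather than a resolution of the obstacle. In short, the proposal correctly identifies the relevant circle of ideas but contains no new argument that closes any of the gaps the paper itself identifies as open.
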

Some extra condition besides strict  log concavity on $V$ is necessary.  
The reason is that the 
conjecture rules out bounded isoperimetric
sets, but numerical examples in $\R^2$ of Kolesnikov and Zhdanov \cite{KZ} show
that for $V$ of the form $c+ |x|^\b$, $1 \le  \b < 2$, there are bounded isoperimetric sets
at some volume fractions.  On the other hand, isoperimetric sets
are never relatively compact in bounded domains $\O$.  Indeed,
if the isoperimetric set $E$ were relatively compact in $\O$, it
would be a ball.    After translation, we can assume
the ball is tangent to the boundary of $\O$.  Then we can perturb
$E$ using a homothety to find a competitor with smaller perimeter:
For sufficiently small $t>0$ and suitable $x$,  there is a set $(tx + (1+t)E)\cap \O$
with the same volume as $E$ but a smaller perimeter relative
to $\O$ because it shares a portion of its boundary with $\de \O$. 
This is a contradiction.

Note that the choice of vertical direction $x_n$ can depend on $E$.     
For example, if $w(x) = c_n e^{-|x|^2}$, the gaussian, C. Borell's theorem \cite{Bor}
says that the isoperimetric sets are all half spaces, allowing for every orientation. Thus,
in this case, the conjecture is true and the vertical direction must be chosen depending 
on the choice of $E$.   

Any weight that is a product of log-concave one-dimensional weights will
have isoperimetric sets that are half spaces (see \cite{Bob}).  
If the one-dimensional
weight is strictly log-concave (that is, $V$ is strictly convex) then the
all isoperimetric sets are half spaces.  On the other hand, without
strictness, there can be other extremals.   As observed in Rosales et al \cite{RCBM},
for the weight $w(x) = e^{-|x|}/2$ for $x\in \R$, every finite interval of volume $\a$ 
containing the origin is an extremal if $0 <\a \le 1/2$.  Taking the product with
another log-concave function in a second variable, we have an isoperimetric
set whose boundary is two parallel lines.

\section{The method of continuity}

A well known theorem of Bombieri, De Giorgi and Miranda \cite{BDGM}
gives an {\em a priori} Lipschitz bound on minimal surfaces that are graphs.
\begin{theorem} \label{thm:gradbound} Let $\f \in C^\infty(B_1)$ be a solution to the minimal surface (mean curvature zero) equation
\[
\mbox{\rm{div}} \left(\frac{\nabla \f}{\sqrt{1+|\nabla \f|^2}}  \right) = 0,
\]
with $|\f| \le M$.  Then there is a constant $C = C(n,M)$ such that 
\begin{equation} \label{eq:gradbound}
|\nabla \f|\le C \quad \mbox{on} \ B_{1/2}.
\end{equation}
\end{theorem}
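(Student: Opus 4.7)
The plan is to view the graph $\Sigma = \{(x,\varphi(x)) : x\in B_1\}\subset \R^{n+1}$ as a minimal hypersurface and to derive a pointwise upper bound on $v := \sqrt{1+|\nabla\varphi|^2}$ on $B_{1/2}$. Equivalently, letting $N = (-\nabla\varphi, 1)/v$ be the upward unit normal so that $N_{n+1} = 1/v$, the task is to bound $N_{n+1}$ away from zero. Throughout, I will work intrinsically on $\Sigma$, exploiting the classical fact that a $C^2$ graphical solution of the minimal surface equation is area-minimizing with respect to arbitrary (not necessarily graphical) competitors with the same boundary.

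The first key step is subharmonicity. On any minimal hypersurface in $\R^{n+1}$, the ambient coordinate functions are harmonic with respect to the induced Laplace-Beltrami operator $\Delta_\Sigma$, and the Gauss map satisfies $\Delta_\Sigma N + |A|^2 N = 0$, where $|A|$ denotes the length of the second fundamental form. Thus $N_{n+1}$ is a positive solution of $\Delta_\Sigma N_{n+1} + |A|^2 N_{n+1} = 0$. Setting $w := \log v = -\log N_{n+1}$ and applying the chain rule gives $\Delta_\Sigma w = |A|^2 + |\nabla_\Sigma w|^2 \ge 0$, so that $w$ is a nonnegative subharmonic function on $\Sigma$.

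Next, I would run a De Giorgi/Moser iteration for $w$ directly on $\Sigma$. The Michael-Simon Sobolev inequality holds on any minimal submanifold of $\R^{n+1}$ with constants depending only on $n$, and combined with the subharmonicity of $w^p$ for $p \ge 1$ (which follows from a direct chain-rule computation), a standard iteration produces an estimate of the form
\[
\sup_{\Sigma\cap (B_{1/2}\times \R)} w \;\le\; C(n)\, \Xint-_{\Sigma \cap (B_{3/4}\times \R)} w\, dH_n,
\]
provided we already control the area $H_n(\Sigma\cap (B_r\times \R))$ for $r \le 3/4$. This area bound is cheap: using the area-minimizing property against the competitor obtained by capping $\Sigma\cap (B_r \times \R)$ with a flat horizontal disk plus a piece of the cylinder $\partial B_r \times [-M,M]$ yields $H_n(\Sigma\cap (B_r\times \R))\le C(n)(r^n + M r^{n-1})$.

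The main obstacle, and the deepest step, is to convert the height bound $|\varphi|\le M$ into an $L^1$ control on $w = \log v$ over $\Sigma \cap (B_{3/4}\times \R)$. This is the heart of the Bombieri-De Giorgi-Miranda argument. The idea is to test the minimal surface equation against a carefully chosen function of $\varphi$ times a horizontal cutoff $\eta(x)$, integrate by parts, and extract an identity whose left-hand side controls $\int \log v \, dH_n$ while the right-hand side is bounded by $C(n,M)$ via the sup bound on $\varphi$ and the previously established area estimate. Closing this step — managing the tension between the intrinsic geometry of $\Sigma$ on which the iteration lives and the ambient vertical information on $B_1$ encoded by $M$ — is what makes the theorem nontrivial; once it is in hand, the subharmonicity of $w$ combined with Michael-Simon and Moser iteration delivers \eqref{eq:gradbound}.
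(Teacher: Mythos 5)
Your proposal is a correct sketch, but it follows the classical integral-estimate route (essentially Bombieri--De~Giorgi--Miranda filtered through the Michael--Simon Sobolev inequality and Moser iteration on $\log v$), whereas the paper deliberately presents a quite different proof taken from De~Silva--Jerison \cite{DSJ}. In the paper's argument one never touches $\log v$ or the Sobolev/iteration machinery. Instead, one slides the surface $S$ vertically by $\eps$ to produce $S^\eps$, and the Lipschitz bound is recast as a quantitative lower bound $c\eps$ on the separation between $S$ and $S^\eps$. This separation is produced by a normal deformation $S_t = \{P + tw(P)\nu(P)\}$, where $w$ is set equal to $1$ on a ``good'' set $G$ of large $H_{n-1}$-measure (where $|\nabla\f|$ is already bounded, via Chebyshev applied to the cheap estimate $\int_{B_{1/2}}|\nabla\f|\,dx \le CM^2$) and is defined on the complement by solving $\Delta_S w = 0$ with $w=1$ on $G$, $w=0$ on $\de S$. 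The Bombieri--Giusti Harnack inequality (Theorem~\ref{thm:HarnackBG}) for nonnegative supersolutions on area-minimizing surfaces then gives $w \ge c > 0$ on $S\cap B_\beta$, and since $\Delta_S w + |A_S|^2 w > 0$ off $G$, the deformed surface $S_t$ has positive mean curvature there, so it cannot touch $S^\eps$ from below. Thus the paper's proof turns entirely on a Harnack inequality, i.e.\ on quantitative connectivity of the surface, which is the organizing theme of the article; your proof turns instead on subharmonicity of $\log v$ plus Sobolev and an area bound.

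Both routes are legitimate, but they buy different things. Your route is self-contained modulo Michael--Simon and yields the bound directly as a mean-value/iteration estimate; its transferability depends on having a Sobolev inequality and good control of $\log v$ in $L^1$. The paper's route isolates connectivity (Harnack) as the single black box and is designed so that the same mechanism applies verbatim to free boundaries (Theorem~\ref{thm:gradboundDSJ}) and, conjecturally, to isoperimetric interfaces and level sets of semilinear equations, where an analogue of the Michael--Simon/Moser scheme on $\log v$ is not available or not the natural object. One caution about your sketch: the step you flag as ``the heart of the argument'' --- converting $|\f|\le M$ into an $L^1$ bound on $w=\log v$ over $\Sigma$, i.e.\ a bound on $\int v\log v\,dx$ --- is indeed where all the work in the BDM-type proof lives, and your description of it (testing against a function of $\f$ times a cutoff) is left schematic. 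As stated it is not a proof; the integral identity one needs is delicate and cannot be deduced from the area bound $\int v\,dx \le C(n)(1+M)$ alone, since $v\log v$ is not dominated by $v$. You are aware of the gap, and it is standard, but it would need to be filled for the argument to be complete.
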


Motivated by this result, we formulate a tentative plan to prove
Conjecture \ref{conj:logconcave.Lip} by deformation,
with several gaps in the argument highlighted.  
Consider an isoperimetic set $E_1$ relative to a probability
measure $\mu_1= w_1(x) dx$.  

{\em Step 1.} \ Find 
a continuous family of log-concave probability measures $\mu_t = w_t(x) dx$, 
$0 \le t \le 1$, with $w_t(x)>0$ for  all $x\in \R^n$ and a {\em continuous} family
$E_t$ of isoperimetric sets with $\mu_t(E_t)   = \a$,
with respect to $t$ in  Hausdorff distance when intersected with
compact subsets, 
such that, for instance,  
\[
E_0 = \{(x',x_n): x_n \ge 0\}.
\]

{\em Step 2.}  Show that for all $t$,
$0\le t \le1$, $\de E_t$ {has an appropriate $L^\infty$ bound}
replacing the bound by $M$ in Theorem \ref{thm:gradbound}.
If the mean curvature were zero, scaling dictates 
linear growth in the sense that
\begin{equation} \label{eq:Mbound}
\de E_t \cap \{(x',x_n): |x'|\le K\}  \subset \{(x',x_n)\in \R^{n-1}\times \R: |x'| \le K, \   |x_n| \le  MK\},
\end{equation}
for sufficiently large $K$.

Let $A \subset [0,1]$ be the set of $t\in [0,1]$ such that 
\[
\de E_t   =  \{(x',x_n)\in \R^{n-1}\times \R:   x_n  = \psi_t(x')\},
\]
with $\psi_t $ with Lipschitz bound $2C$.   We show
that the set $A= [0,1]$ by showing that it is closed,
relatively open in $[0,1]$ and nonempty ($0\in A$). 
Standard
regularity theory of prescribed mean curvature surfaces implies
that $\psi_t$ is smooth for $t\in A$.  

{\em Step 3.} \ {\em Prove a gradient bound theorem like the one of Bombieri et al} that applies to stable solutions to the isoperimetric problem as opposed
to area minimizers, showing that given a bound like \eqref{eq:Mbound}
one has gradient bounds on the graph.
Then for all $t\in A$, we have the uniform Lipschitz bound $|\nabla \psi_t| \le C$.
Lipschitz bounds are preserved under convergence of sets $E_t$ under
Hausdorff distance, so the set $A$ is closed.  Standard pertubation theory
shows that for any $t\in A$, for every $s$ near $t$,
the function $\psi_s$ is smooth, and in particular a graph.  Hence,
$s\in A$ and $A$ is open.

This outline has substantial gaps.  First we have not proved
that we can deform $E_1$ continuously to a known $E_0$ and $w_0$.
In our view, the hardest step, by far, is Step 2.  At present it is not
even formulated accurately for constant mean curvature as opposed to
zero mean curvature.  In addition to being revised to account
for nonzero mean curvature,  the cone condition \eqref{eq:Mbound} 
will need to be scaled
appropriately at the boundary of convex domains.  Likewise,
the variant of \eqref{eq:Mbound} to be used in Step 3 
in the case of log concave densities should also be 
rescaled appropriately at infinity.
Despite the
fact that we are not providing a precise hypothesis for
Step 3, we will be able to explain the mechanism of proof, which
has many precedents and is in the spirit of regularity theorems of the form ``flat implies Lipschitz".  We will begin this discussion in the next section.

In the section following, we will discuss the type of bound we expect is
needed to establish estimates like the one in Step 2, which represent
a global, large scale starting place for regularity at smaller scales provided
by Step 3.

\section{Gradient bounds}

Here we will outline the proof of the gradient bound Theorem \ref{thm:gradbound}
given in our work with De Silva \cite{DSJ}. This is far from the first and
far from the only proof of the theorem, but it reveals how connectivity
is the central mechanism.  Moreover, the method should work for
a wide range of variational problems.  For instance in 
 \cite{DSJ}, an analogous proof is given that applies to free boundaries.

In brief, there is a Harnack inequality that tells us that the points
of $S$ are connected to each other in a quantitative way.
The Harnack inequality can be used to show
that the points of $S$ ``help each other".  If $S$ has
bounded slope on a set of large measure, then it has bounded slope
at all points.  

In more detail, consider  the function $\f$ of Theorem \ref{thm:gradbound} and
define the surface
\[
S = \{(x,\f(x)): x\in \R^{n-1}, \ |x| < 1\}.
\]
The idea is to translate $S$ up vertically by $\eps$, that is,
to consider $S^\eps = S +(0,\eps)$.
Estimating the Lipschitz constant
of $\f$ is equivalent to proving a quantitative separation
between $S$ and $S^\eps$, namely that for sufficiently small $\eps>0$,
the distance in $\R^n$ from $S^\eps$ to $S$ is bounded below by
$c\eps$ for some $c>0$.   We achieve this by showing that for an appropriate
function $w>0$ the deformation of $S$ 
in the normal direction:
\[
S_t  : = \{P +tw(P)\nu(P) : P \in  S\},
\quad 
P : = (x,\f(x)), \quad \nu(P) = \frac{(-\nabla \f(x),1)}{\sqrt{1 + |\nabla \f(x)|^2}} \, .
\]
does not touch $S^\eps$ for $0 \le t\le c\eps$, when restricted to a sufficiently
small ball $B_\beta$, with $\beta>0$ to be chosen later.

Assume without loss of generality, $\f(0) = 0$.
We will construct a function $w$ on $S$
so that $w(P) \ge c>0$ on all of $S\cap B_\beta$ and so
that for all $0 \le t \le c\eps$, the surface $S_t$ does
not touch $S^\eps$.  There will be two reasons why no
touching is allowed.  On a ``good" set of
large measure, we will have $\nabla
\f$ is bounded, using the $L^\infty$ bound on
$\f$ and the mean curvature zero equation for $\f$.  On the remaining
portion of $S\cap B_\beta$ we define $w$ by solving a boundary value problem 
on $S$ and invoke an intrinsic Harnack inequality  of
Bombieri and Giusti.

For $t$ sufficiently small, since
$S$ has mean curvature zero ($H(P,S) =0$), the 
mean curvature of $S_t$ is given by 
\begin{equation}\label{eq:Hvariation}
H(P_t,S_t) = t(\Delta_S  w(P) + |A_S|^2 w(P)) + O(t^2),
\end{equation}
with $P_t: = P + tw(P)\nu(P)$, $\Delta_S$ the Laplace-Beltrami operator
on $S$, and $|A_S|$ the Hilbert-Schmidt norm of the second fundamental 
form $A_S$ of $S$.

Using a test function, it's not hard to check that for some
dimensional constant $C$, 
\[
\int_{B_{1/2}} |\nabla \f| \, dx \le C M^2. 
\]
It follows from Chebyshev's inequality that there
is a ``good" set $G\subset S\cap B_{\beta}$ whose
measure is a significant fraction of the whole,
on which the slope of $\f$ is bounded and we can set $w=1$.  
For the complement, we solve the equation
\[
\Delta_S w = 0 \quad \mbox{in} \ S \setminus G;
\quad w = 1 \ \mbox{on} \ G; \quad w = 0 \ \mbox{on}\  \de S.
\]
In particular, $w$ is a non-negative supersolution $\Delta_S  w \le 0$ 
in $S$.

The intrinsic Harnack inequality for supersolutions
of $\Delta_S$ due to Bombieri and Giusti \cite{BG} is
stated as follows.
\begin{theorem} \label{thm:HarnackBG} (Bombieri-Giusti)  Let $S$ be
an area minimizing surface in the ball $B_1$ in $\R^n$.
There are dimensional
constants $\beta>0$ and $C<\infty$ such that if $w\ge 0$
and $\Delta_S w \le 0$ on $S$, then 
\[
\int_{B_\beta\cap S} w \, d\sigma \le C \inf_{B_\beta \cap S} w
\]
\end{theorem}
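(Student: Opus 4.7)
The plan is to adapt Moser iteration to the intrinsic geometry of $S$. I would assemble three ingredients: (i) an intrinsic isoperimetric/Sobolev inequality on $S$, (ii) a Caccioppoli-type energy estimate for nonnegative supersolutions of $\Delta_S$, and (iii) a John--Nirenberg step controlling the oscillation of $\log w$ to bridge a small-$p$ reverse H\"older estimate to the stated $L^1$-to-infimum bound.

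For (i), area-minimality gives the uniform density estimate $H_{n-1}(S\cap B_r)\le Cr^{n-1}$ for $r\le 1/2$ and an intrinsic isoperimetric inequality of Bombieri--Giusti type: for a relatively compact $E\subset S$ inside a sufficiently small intrinsic ball,
\[
(H_{n-1}(E))^{(n-2)/(n-1)} \le C\, H_{n-2}(\de_S E).
\]
This is obtained by comparing $S$ with the surface produced by slicing off a cap of $S$ and replacing it by a piece of a hyperplane; area-minimality is exactly what makes this comparison useful. By the coarea formula the isoperimetric inequality is equivalent to a Sobolev inequality for compactly supported Lipschitz functions on small intrinsic balls in $S$.

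For (ii), I would test the inequality $\Delta_S w\le 0$ against $\psi^2 w^{2q-1}$ with a Lipschitz cutoff $\psi$ supported in a small extrinsic ball, obtaining, for each $q\ne 1/2$,
\[
\int_S |\nabla_S w^q|^2 \psi^2\, d\sigma \le C(q)\int_S w^{2q}|\nabla_S \psi|^2\, d\sigma.
\]
Combining this with the Sobolev inequality of step (i) and iterating over a geometric sequence of nested intrinsic balls in the Moser fashion yields, for every $p>0$ in an appropriate range, local reverse H\"older bounds
\[
\sup_{B_\beta\cap S} w^{-1} \le C_p\left(\frac{1}{H_{n-1}(B_{2\beta}\cap S)}\int_{B_{2\beta}\cap S} w^{-p}\, d\sigma\right)^{1/p}
\]
together with an analogous $L^p$-to-$L^\infty$ bound for $w$ itself for small $p>0$.

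The main obstacle, and the real content of the Bombieri--Giusti theorem, is step (iii): passing from an $L^p$ bound with $p$ small and positive to the $L^1$ bound in the conclusion. To do this I would test $\Delta_S w\le 0$ against $\psi^2/w$ to obtain the gradient estimate $\int_S |\nabla_S \log w|^2\psi^2\, d\sigma \le C\int_S |\nabla_S\psi|^2\, d\sigma$, which together with the Poincar\'e inequality on intrinsic balls (a consequence of step (i)) places $\log w$ in an intrinsic BMO class on $S\cap B_\beta$. A John--Nirenberg estimate on the doubling metric-measure space $(S\cap B_1, d_S, H_{n-1})$ then produces $s>0$ such that both $e^{s\log w}$ and $e^{-s\log w}$ are integrable with controlled norms, so that
\[
\left(\int_{B_{2\beta}\cap S} w^s\, d\sigma\right)\left(\int_{B_{2\beta}\cap S} w^{-s}\, d\sigma\right) \le C.
\]
Chaining this with the Moser reverse H\"older estimates of step (ii) applied to both $w^s$ and $w^{-s}$ closes the loop and yields the stated inequality. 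The hardest points to verify rigorously are the Whitney-cube decomposition and doubling property needed to run John--Nirenberg intrinsically on $S$, and the selection of $\beta$ small enough that intrinsic and extrinsic balls of radius $\sim \beta$ centered at points of $S\cap B_\beta$ remain comparable, so that the density and isoperimetric estimates of step (i) transfer cleanly to the Euclidean balls appearing in the statement.
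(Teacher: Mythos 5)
Your overall plan is the same as the Bombieri--Giusti argument that the paper cites but does not reprove: assemble an intrinsic Sobolev inequality on $S$, a Caccioppoli estimate for supersolutions, and a John--Nirenberg crossover, then run Moser iteration. The paper's Section 5 sketches exactly this structure. However, two of your steps misidentify or misstate the key mechanisms.

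First, in step (i) you derive the intrinsic isoperimetric inequality on $S$ ``by comparing $S$ with the surface produced by slicing off a cap of $S$ and replacing it by a piece of a hyperplane.'' That comparison gives the density upper bound $H_{n-1}(S\cap B_r)\le C r^{n-1}$, but it does not bound a subregion $S_1\subset S$ in terms of the $(n-2)$-measure of its relative boundary $\de_S S_1$: it controls pieces of $S$ cut off by spheres, not by arbitrary codimension-two slices interior to $S$. The genuine input, as the paper emphasizes, is the Almgren--De Giorgi indecomposability theorem (Theorem~\ref{thm:connectivity-ADG}), from which De Giorgi deduced the isoperimetric inequality on $S$ (Theorem~\ref{thm:isoperimetricDG}), from which Bombieri--Giusti derived the integrated Sobolev/Poincar\'e inequality (Theorem~\ref{thm:isoperimetricBG}). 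That quantitative connectivity statement is the nontrivial ingredient; your cap comparison replaces it with an argument that bounds the wrong quantity.

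Second, in step (ii) you claim ``an analogous $L^p$-to-$L^\infty$ bound for $w$ itself for small $p>0$.'' No such bound holds for nonnegative supersolutions of $\Delta_S$: an unbounded positive harmonic function on $S$ is a supersolution. What the Caccioppoli/Sobolev iteration with exponents $\gamma\in(-1,0)$ actually produces is a reverse H\"older chain among positive powers strictly below the critical exponent for the $(n-1)$-dimensional surface, i.e.\ $\left(\fint w^{p_2}\right)^{1/p_2}\le C\left(\fint w^{p_1}\right)^{1/p_1}$ for $0<p_1<p_2<(n-1)/(n-3)$. Since $(n-1)/(n-3)>1$, this is what lets you pass from the small positive exponent $s$ produced by John--Nirenberg up to the $L^1$ integral in the conclusion, with no $L^\infty$ bound on $w$ ever asserted. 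With these two corrections your outline becomes a faithful description of the argument in \cite{BG}.
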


By Theorem \ref{thm:HarnackBG}, $w\ge c>0$ on 
all of $S \cap B_\beta$.   We can perturb $w$ slightly so that
we have strict inequality $\Delta_S w >0$ on $S\setminus G$
by relaxing the inequality on $S\cap B_\beta$ to $w \ge c/2$. 
In particular,  $\Delta_S w + |A_S|^2 w >0$ on $(S\cap B_\beta)\setminus G$
and hence $H(P_t,S_t) >0$ for $0 \le t \le c\eps$ for sufficiently small $\eps$,
and all  $P\in (S\cap B_\beta)\setminus G$.   Recall that for $P\in G$,
$P_t\notin S^\eps$ for all $0 \le t \le c\eps$.  Let $t^*>0$ be the smallest
time such that $P_{t^*} \in S^\eps$ for some $P\in (S\cap B_\beta) \setminus G$. 
If $t^* \le c\eps$ we will derive a contradiction.  Indeed,
$S_{t^*}$ touches $S^\eps$ from below at $P_{t^*}$, which implies
$H(P_{t^*},S_{t^*}) \le 0$.  On the other hand $H(P_t,S_t) >0$ for all $t\le c\eps$.
In all, we have $S_t$ stricly below $S^\eps$ on $B_\beta$
for all $0 \le t \le c\eps$. 

\section{Quantitative Connectivity}

In this section, we will see that level sets of elliptic equations satisfy
various connectivity properties.  As we have already
seen, the Harnack inequality
of Bombieri and Giusti, a quantitative form
of the property that least area hypersurfaces are connected,
implies
a gradient bound on minimal graphs.   As is well known, 
Nash-De Georgi estimates imply full $C^\infty$ regularity,
starting from this gradient bound.  
Thus, this form of connectivity can be an important intermediate step towards regularity.

The proof of the Harnack inequality given by Bombieri and Giusti 
starts from
an even more basic connectivity property, due to Almgren and De Giorgi.

Given an open set $E\subset \R^n$, we say $\de E$ has least area or minimizes
area if for every ball $B_r$ and every set $F$,
\[
F \setminus B_r =  E\setminus B_r \quad \mbox{implies} 
\quad P(E,B_{2r}) \le P(F,B_{2r}).
\]
(Recall that $P(E,\O)$ is the perimeter of $E$ relative to $\O$.)
If $A$ is a set of locally finite perimeter, then, by definition
the indicator function of $A$, $1_A$, is a locally a BV function
and the distribution $D 1_A$ is a vector-valued measure.  Let $\| D 1_A\|$  
denote its total variation.  We say that $A$ is {\em decomposable} if there exist
$A_k$, $k=1,\ 2$ of locally finite perimeter with nonzero measure, $|A_k| >0$,  
such that
\[
1_A = 1_{A_1} + 1_{A_2}; \quad \|D1_A\| = \|D1_{A_1}\| + \|D1_{A_2}\|.
\]
The measure-theoretic connectivity theorem of Almgren and De Giorgi
(Theorem 1, \cite{BG}) is as follows.

\begin{theorem}\label{thm:connectivity-ADG}  (Almgren and De Giorgi) 
Let $E\subset \R^n$ be such that $\de E$ has least area,
then $E$ is not decomposable. In particular, $E$ an $\de E$ are connected.
\end{theorem}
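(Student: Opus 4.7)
The strategy is by contradiction: suppose $E = A_1 \cup A_2$ with $|A_i| > 0$, $A_1 \cap A_2 = \emptyset$, and $\|D 1_E\| = \|D 1_{A_1}\| + \|D 1_{A_2}\|$ as measures. I will produce a competitor $F$ agreeing with $E$ outside some ball $B_R$ for which $P(F,B_{2R}) < P(E,B_{2R})$, contradicting the least-area hypothesis. A preliminary observation is that the reduced boundaries $\de^* A_1$ and $\de^* A_2$ are $H_{n-1}$-essentially disjoint: any overlap would cause partial cancellation in the distributional identity $D 1_E = D 1_{A_1} + D 1_{A_2}$, forcing strict inequality between total variations and violating decomposability.

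I would first handle the case in which one of the pieces, say $A_1$, has finite measure. For $R > 0$, take the competitor $F = E \setminus (A_1 \cap B_R)$, which clearly agrees with $E$ outside $B_R$. A direct slicing computation combined with decomposability yields, for almost every $R$,
\[
P(E,B_{2R}) - P(F,B_{2R}) = P(A_1,B_R) - H_{n-1}(\de B_R \cap A_1^{(1)}),
\]
where $A_1^{(1)}$ denotes the density-one points of $A_1$. Since $|A_1| < \infty$, the co-area identity gives $\int_0^\infty H_{n-1}(\de B_R\cap A_1^{(1)})\,dR = |A_1| < \infty$, which forces $H_{n-1}(\de B_R\cap A_1^{(1)}) \to 0$ along a sequence $R_j\to\infty$, while $P(A_1,B_R) \to P(A_1) \ge c_n |A_1|^{(n-1)/n} > 0$ by the classical isoperimetric inequality. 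Thus the right-hand side is strictly positive for some large $R$, producing the contradiction.

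The harder case, which I expect to be the main obstacle, is when both $A_i$ have infinite measure. Here I would try the filling competitor $F = E \cup B_R$, for which a parallel slicing calculation gives
\[
P(E,B_{2R}) - P(F,B_{2R}) = P(E,B_R) - H_{n-1}(\de B_R \cap E^{(0)}).
\]
By the preliminary observation, $\de^* E$ splits into two independent pieces, so monotonicity applied at regular points of each component yields a lower bound $P(E,B_R) \ge (2\omega_{n-1} - o(1))\, R^{n-1}$ at appropriate scales, while $E^{(0)} \cap \de B_R$ is trapped in the ``gap'' between the two components and should have $H_{n-1}$-measure of strictly smaller order $o(R^{n-1})$. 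Making this gap estimate quantitative in full generality is the delicate point. A cleaner conceptual route is to blow $E$ down at infinity to a minimizing tangent cone $C$, which inherits both area-minimality and decomposability, and to derive the contradiction at the level of cones (using the rigidity and classification of minimizing cones---e.g., as hyperplanes in low dimensions), then transfer back via upper semicontinuity of perimeter under blow-down.

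The remaining claims that $E$ and $\de E$ are connected follow from indecomposability by standard arguments, noting that $E^c$ is also a least-area set (the definition is symmetric under complementation) and so is likewise indecomposable and connected.
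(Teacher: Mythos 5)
The paper states this theorem without proof, citing Theorem 1 of Bombieri--Giusti \cite{BG}, so there is no in-paper argument to compare against; I can only assess your proposal on its own merits. Your Case~1 ($|A_1|<\infty$) is correct and clean. The decisive difficulty is Case~2, as you flag, and your first route there is not merely a gap but a dead end. Minimality and the filling competitor $F = E\cup B_R$ already give the \emph{standard upper density bound} $P(E,B_R)\le H_{n-1}(\partial B_R\cap E^{(0)})$ for a.e.\ $R$; to contradict it you invoke $H_{n-1}(\partial B_R\cap E^{(0)}) = o(R^{n-1})$, but that estimate is false in the situation at hand. Up to a Lebesgue-null set $E^{(0)} = E^c$, which is itself area-minimizing, has positive measure (your preliminary observation gives $\partial^* E\supset\partial^* A_1$ up to $H_{n-1}$-null sets, so $E\ne\R^n$), and therefore satisfies the volume density bound $|E^c\cap B_R|\gtrsim R^n$ at any fixed boundary point. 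By coarea this forces $H_{n-1}(\partial B_r\cap E^{(0)})\gtrsim r^{n-1}$ for a set of radii $r\in[0,R]$ of measure comparable to $R$. The ``gap'' between $A_1$ and $A_2$ is not thin; it is an entire area-minimizing region of full volume growth, and the filling competitor does not see the decomposition at all.

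The blow-down alternative is the promising route, but the key tool is not a ``classification of minimizing cones'' (unavailable for $n\ge 8$, where the Simons cone lives); it is a Frankel-type theorem. If a minimizing cone $C = C_1\sqcup C_2$ decomposes, the links $\partial C_i\cap S^{n-1}$ are disjoint compact minimal hypersurfaces of the positively Ricci-curved sphere $S^{n-1}$, and a Frankel theorem adapted to allow singular sets of codimension $\ge 7$ forces them to intersect. To run the reduction you also need: (i) that each $A_i$ is itself area-minimizing (insert the competitor $F_i\cup A_{3-i}$ into the minimality of $E$ and use subadditivity of perimeter together with the decomposability identity); (ii) that both blow-down limits are nontrivial (this follows from (i) and the volume density bound at a fixed boundary point of each $A_i$); and (iii) that decomposability passes to the blow-down limit (lower semicontinuity of each $P(C_i,\cdot)$ plus subadditivity of perimeter pins down the total-variation identity for $C$). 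With these ingredients the outline becomes a correct proof, but it is substantially heavier than the sketch and, as far as I can tell, different from the more elementary argument in \cite{BG}.
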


De Giorgi deduced from this theorem the following scale-invariant
isoperimetric inequality on the least perimeter surface $ \de E$. 

\begin{theorem}\label{thm:isoperimetricDG}  (De Giorgi) 
Let $E\subset \R^n$ be an open subset such that $\de E$ has least area.
Let 
\[
S = B_r \cap \de E
\]
Suppose that  $S_k\subset S$, $k = 1, \ 2$,  are locally finite perimeter sets in $S$ 
such that $1_S = 1_{S_1} + 1_{S_2}$.  Then 
\[
 H^{n-2}(\de_* S_1) = H^{n-2}(\de_* S_2) \ge  c\, \min_k 
 H^{n-1}(S_k \cap B_{\beta r})^{(n-2)/(n-1)},
 \]
 with  $c>0$ and $\beta>0$ dimensional constants. By $\de_*S_k$ we mean the
 reduced (essentially the regular part of) the boundary of $S_k$ relative to $S$.
 \end{theorem}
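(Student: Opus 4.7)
By the homogeneity $x \mapsto x/r$, reduce to $r=1$. The plan is an ambient competitor argument: trade the $(n-1)$-area of a piece of the minimizing surface for the cost of spanning its $(n-2)$-boundary by a cheap ambient chain, then invoke area-minimality of $E$ to reverse the inequality. Because $S = S_1 \sqcup S_2$ is a partition of $S = B_1 \cap \de E$, the set $\de_* S_1 = \de_* S_2$ is an $(n-2)$-rectifiable cycle sitting inside $B_1$ on the surface $\de E$; after slicing by a well-chosen sphere it will extend to an honest $(n-2)$-cycle in $\R^n$ that we can fill.

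Concretely, first I would choose a radius $\rho \in (\beta, 1)$ for which the coarea formula applied to $t \mapsto H^{n-1}(S_1 \cap B_t)$ on $(\beta,1)$ yields
\[
H^{n-2}(S_1 \cap \de B_\rho) \le \frac{C}{1-\beta}\, H^{n-1}\bigl(S_1 \cap (B_1 \setminus B_\beta)\bigr).
\]
Form the $(n-2)$-cycle $\Gamma := (\de_* S_1 \cap B_\rho) \cup (S_1 \cap \de B_\rho)$, which is the boundary in $\R^n$ of the rectifiable set $S_1 \cap B_\rho$. The Federer--Fleming isoperimetric inequality for integral cycles then produces an $(n-1)$-rectifiable chain $T$ with $\de T = \Gamma$ and
\[
H^{n-1}(T) \le C_n\, H^{n-2}(\Gamma)^{(n-1)/(n-2)}.
\]
Provided $H^{n-2}(\de_* S_1)$ is small (otherwise the inequality is trivial for a suitable $c$), this cap $T$ lies inside $B_1$. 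The Almgren--De Giorgi connectivity theorem (Theorem \ref{thm:connectivity-ADG}) ensures that $(S_1 \cap B_\rho) \cup T$ bounds a set $\Omega$ of finite perimeter. The competitor $E' := E \triangle \Omega$ agrees with $E$ outside $B_1$, so area-minimality of $E$ in $B_2$ gives $P(E,B_2) \le P(E',B_2)$; after cancelling common pieces this becomes
\[
H^{n-1}(S_1 \cap B_\rho) \le H^{n-1}(T) \le C_n \bigl( H^{n-2}(\de_* S_1) + H^{n-2}(S_1 \cap \de B_\rho) \bigr)^{(n-1)/(n-2)}.
\]
Combining with the slicing bound, and using that we may assume $H^{n-1}(S_1 \cap B_\beta) \le H^{n-1}(S_2 \cap B_\beta)$ so that the annular mass can be absorbed by choosing $\beta$ small, yields the claim for $S_1$; the symmetric argument for $S_2$ accounts for the $\min_k$ on the right.

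The hard part, and the reason for the smaller radius $\beta r$ on the right-hand side, is the slice contribution $H^{n-2}(S_1 \cap \de B_\rho)$: it must be dominated in the relevant regime by a quantity absorbable on the left, which is where the freedom to swap $S_1 \leftrightarrow S_2$ and to iterate in the radius $\rho$ (in the spirit of Bombieri--Giusti) becomes essential. A secondary but nontrivial point is to justify that the Federer--Fleming cap $T$ actually lies in $B_1$ and that $(S_1 \cap B_\rho) \cup T$ bounds a set of finite perimeter in $\R^n$; both depend on the smallness of $H^{n-2}(\de_* S_1)$ and on the indecomposability supplied by Theorem \ref{thm:connectivity-ADG}, which precludes the degenerate possibility that $\Gamma$ is trivial while $S_1$ still carries positive $H^{n-1}$-measure in $B_\beta$.
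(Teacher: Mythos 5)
The paper itself gives no proof of this theorem; it is stated as a result of De Giorgi, deduced from the Almgren--De Giorgi connectivity theorem (Theorem~1 of \cite{BG}). So there is no internal argument to compare against, and your proposal has to be judged on its own.

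Your skeleton --- fill the $(n-2)$-boundary of $S_1\cap B_\rho$ (consisting of $\de_*S_1$ together with a slice $S_1\cap\de B_\rho$) by a cheap $(n-1)$-chain $T$ via Federer--Fleming, then use area-minimality of $\de E$ to deduce $H^{n-1}(S_1\cap B_\rho)\le H^{n-1}(T)$ --- is a reasonable plan, and the competitor step can indeed be made rigorous using $P(E\triangle\Omega)=H^{n-1}(\de_*E\,\triangle\,\de_*\Omega)$, provided $\Omega\subset B_1$. But there is a genuine gap in the treatment of the slice term, and you flag it without closing it. Coarea gives a radius $\rho\in(\beta,1)$ with $H^{n-2}(S_1\cap\de B_\rho)\lesssim(1-\beta)^{-1}H^{n-1}\bigl(S_1\cap(B_1\setminus B_\beta)\bigr)$, but the right side is of size $O(1)$: it is bounded by $H^{n-1}(S\cap B_1)$, which is a dimensional constant by the density bound for minimizers. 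So your chain of inequalities yields $H^{n-1}(S_1\cap B_\beta)\lesssim\bigl(H^{n-2}(\de_*S_1)+C_n\bigr)^{(n-1)/(n-2)}$, which is vacuously true and gives nothing in the regime where $H^{n-2}(\de_*S_1)$ is small --- precisely the regime in which the theorem has content. Shrinking $\beta$ makes the annular mass larger, not smaller, so it cannot ``absorb'' anything; and the WLOG assumption $H^{n-1}(S_1\cap B_\beta)\le H^{n-1}(S_2\cap B_\beta)$ controls the left-hand side, not the slice. The iteration in $\rho$ you allude to is the real work here, is delicate because of the superlinear power $(n-1)/(n-2)$, and is not carried out. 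As written, the proof is incomplete at exactly the point you yourself identify as hard.

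Two smaller remarks. First, the appeal to Theorem~\ref{thm:connectivity-ADG} to justify that the $(n-1)$-cycle $(S_1\cap B_\rho)\cup T$ bounds a set $\Omega$ of finite perimeter is not needed: every integral $(n-1)$-cycle in $\R^n$ is the boundary of an integer-multiplicity $n$-current, hence of a set of finite perimeter, by the constancy theorem; the Almgren--De Giorgi indecomposability theorem plays no role there. Second, the fact that the paper attributes the theorem to a deduction from Theorem~\ref{thm:connectivity-ADG} suggests that the intended proof uses the connectivity/decomposability property of $E$ in a more essential way than your filling argument does; your route, if completed, would be a genuinely different one, closer in spirit to deriving a Michael--Simon-type relative isoperimetric inequality from the ambient Federer--Fleming inequality.
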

 
 From this, Bombieri and Giusti deduce an integrated form of
 the isoperimetric inequality 

\begin{theorem}\label{thm:isoperimetricBG}  (Bombieri-Giusti)
Let $E\subset \R^n$
be an open subset such that $\de E$ has least area.  There is a dimensional
constants $C$ and $\beta>0$ such that for every $f\in C^1(B_r)$, 
\[
\min_c \int_{B_{\beta r} \cap S}|f-c|^{(n-1)/(n-2)}\, d\sigma \le C \left(\int_{B_r\cap S} |\nabla_S f| \, d\sigma  \right)^{(n-1)/(n-2)}\, .
\]
(Here $\nabla_S$ is the component of the gradient tangent to $S$.)
\end{theorem}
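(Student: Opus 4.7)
The plan is to derive the stated estimate from the level-set isoperimetric inequality of Theorem~\ref{thm:isoperimetricDG} by the classical Maz'ya truncation argument, namely by combining the coarea formula on $S$ with Minkowski's integral inequality applied to the layer-cake representation of $f$. First I would normalize by choosing $c$ to be a median of $f$ on $B_{\beta r}\cap S$ with respect to $H^{n-1}$, so that for every $t\ge c$ the superlevel set $\{f>t\}\cap B_{\beta r}\cap S$ has $H^{n-1}$-measure at most $\tfrac12 H^{n-1}(B_{\beta r}\cap S)$. For almost every such $t$, the decomposition $S_1:=\{f>t\}\cap S$, $S_2:=S\setminus S_1$ fits the hypotheses of Theorem~\ref{thm:isoperimetricDG}, and the median bound guarantees that the minimum on the right-hand side of that theorem is attained by $S_1$, giving
\[
H^{n-2}\bigl(\de_* S_1\bigr)\ \ge\ c_1\,V(t)^{(n-2)/(n-1)},\qquad V(t):=H^{n-1}\bigl(\{f>t\}\cap B_{\beta r}\cap S\bigr),
\]
where $c_1>0$ is the dimensional constant of Theorem~\ref{thm:isoperimetricDG}. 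Integrating in $t\ge c$ and invoking the coarea formula on the rectifiable hypersurface $S$, which identifies $\int_c^\infty H^{n-2}(\{f=t\}\cap S)\,dt$ with $\int_{\{f>c\}\cap S}|\nabla_S f|\,d\sigma$, produces the intermediate bound
\[
\int_c^\infty V(t)^{(n-2)/(n-1)}\,dt\ \le\ C\int_{B_r\cap S}|\nabla_S f|\,d\sigma.
\]

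Next I would convert this into a bound on the $L^p$ norm of $(f-c)_+$ on $B_{\beta r}\cap S$, with $p:=(n-1)/(n-2)$, by applying Minkowski's integral inequality to the layer-cake representation $(f-c)_+=\int_c^\infty 1_{\{f>t\}}\,dt$:
\[
\|(f-c)_+\|_{L^p(B_{\beta r}\cap S)}\ \le\ \int_c^\infty \bigl\|1_{\{f>t\}}\bigr\|_{L^p(B_{\beta r}\cap S)}\,dt\ =\ \int_c^\infty V(t)^{1/p}\,dt,
\]
and the identity $1/p=(n-2)/(n-1)$ matches this last integral with the previous display. Raising the resulting estimate to the $p$-th power, running the same argument on $-f$ to treat $(f-c)_-$, and summing the two contributions produces the desired bound on $\int_{B_{\beta r}\cap S}|f-c|^{(n-1)/(n-2)}\,d\sigma$ in terms of $\bigl(\int_{B_r\cap S}|\nabla_S f|\,d\sigma\bigr)^{(n-1)/(n-2)}$.

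The main obstacle here is essentially bookkeeping rather than conceptual: one must legitimate the coarea identity and the hypotheses of Theorem~\ref{thm:isoperimetricDG} for almost every level set of a $C^1$ function on the possibly singular hypersurface $S$. Since the singular set of an area-minimizing boundary has Hausdorff codimension at least seven and therefore zero $H^{n-2}$-measure, Sard's theorem on the regular part of $S$ supplies a full-measure set of levels $t$ at which $\de_*\{f>t\}\cap S$ coincides with the smooth $(n-2)$-manifold $\{f=t\}\cap S$ and for which the coarea formula applies. The inclusion $\de_* S_1\subset S\subset B_r$ is automatic from the setup, so the separation between the inner ball $B_{\beta r}$ and the outer ball $B_r$ is inherited verbatim from Theorem~\ref{thm:isoperimetricDG} and requires no further work.
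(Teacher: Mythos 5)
The paper states this result without proof, attributing it to Bombieri--Giusti and saying only that it is ``deduced'' from the level-set isoperimetric inequality of Theorem~\ref{thm:isoperimetricDG}. Your argument --- fix the median as $c$, apply Theorem~\ref{thm:isoperimetricDG} to the superlevel-set partition $S_1=\{f>t\}\cap S$, $S_2=S\setminus S_1$ (where the median choice guarantees the $\min_k$ falls on $S_1$ for $t\ge c$), integrate in $t$, identify the resulting integral with $\int|\nabla_S f|$ by the coarea formula, and then pass from $\int_c^\infty V(t)^{1/p}\,dt$ to $\|(f-c)_+\|_{L^p}$ via the layer-cake representation and Minkowski's integral inequality --- is exactly the classical Federer--Fleming/Maz'ya truncation derivation of a Sobolev--Poincar\'e inequality from an isoperimetric one, and it is the mechanism Bombieri and Giusti use in \cite{BG}. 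Your handling of the measure-theoretic technicalities (the singular set has $H^{n-2}$-measure zero, so a.e.\ level set is a nice hypersurface of the regular part and the coarea identity applies, while $\partial_*S_1\subset\{f=t\}\cap S$ makes the integrated estimate an inequality in the right direction) is the right amount of care. One small cosmetic remark: it may be worth stating explicitly that $H^{n-1}(B_{\beta r}\cap S)<\infty$ (it is, by the monotonicity formula for minimizers), which is what makes the median well defined and the superlevel sets of finite measure; but this is bookkeeping, not a gap.
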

The rest of the proof by Bombieri and Giusti of Theorem \ref{thm:HarnackBG} 
follows
a parallel structure to Moser's Harnack inequality for linear elliptic operators
with bounded measurable coefficients.  

Sternberg and Zumbrun (see Theorem 2.5 \cite{StZ2}) proved qualitative results analogous to
the ones described above for  stable isoperimetric surfaces in convex 
domains.   Let $E$ be an open isoperimetric subset of an open convex set 
$\O\subset \R^n$.  Let $S = \O \cap \de E$.  Then they computed the condition for 
stability 
\begin{equation} \label{eq:poincare-SZ} 
\int_{S}  f^2 \|A_S \|^2 \, dH^{n-1} + \int_{\bar S \cap \de \O}   f^2 
A_{\de \O}(\nu_S,\nu_S) \, dH^{n-2} 
 \le \int_{S} |\nabla_S f|^2 \, dH^{n-2}
\end{equation}
for all $f$ such that 
\[
\int_S f \, dH^{n-1}  = 0.
\]
They deduced (see Theorem 2.6 \cite{StZ2}) that if $\de \O$ is
strictly convex ($A_{\de\O}(\tau,\tau)>0$  for every nonzero tangent vector 
$\tau$), then $S$ is connected.   
In further work in \cite{StZ1, StZ3},
Sternberg and Zumbrun used analogous ideas to prove that
that level sets of solutions to equations of Allen-Cahn type are connected in strictly convex domains.  

The second variation relative to densities was computed by 
V. Bayle,  and the analogous connectivity statement for isoperimetric sets 
relative to strictly log-concave densities was deduced from
the corresponding stability property by Rosales et al in \cite{RCBM}.

In summary, the stability property (positivity of the second variation)
is a Poincar\'e type inequality related to the square $|\nabla_S f|^2$ 
rather than an isoperimetric inequality related to the first power $|\nabla_S f|$. 
Nevertheless, it should lead to quantitative forms of connectivity analogous
to the Harnack inequality of Bombieri and Giusti.  This should lead
in turn to a proof of regularity by deformation in each of the contexts
mentioned above, in particular for level sets of solutions to elliptic equations,
isoperimetric sets.

Formally, the Allen-Cahn equation $2\Delta u = W'(u)$ on $\R^n$ with
$W(u) = (1-u^2)^2$ can be rescaled by $U_\eps(x) = u(x/\eps)$, yielding 
as $\eps\to 0$  a function $U_0$ satisfying $U_0(x) \equiv 1$ on $U_0>0$, 
$U_0 \equiv -1$ on $U_0 < 0$, and  $2\Delta U_0 = a\delta'(U_0)$,
with $\delta'$ the derivative of the Dirac delta function (as a function
of $U_0$, not $x$) and  for the constant $a$ given by 
\[
a = \int_{-1}^1 W(u) \, du \, .
\]

Next,  consider the functional
\[
\int \, [|\nabla v|^2 + F(v)] \, dx
\]
whose  Euler-Lagrange equation is 
$2\Delta u = F'(u)$  on $\R^n$.  If, in contrast with the Allen-Cahn case,
$F' \in C_0^\infty(\R)$ with
\[
\int_\R F'(v) \, dv = c > 0,
\]
then the appropriate scaling is $v_\eps (x) = \eps v(x/\eps)$
and the rescaled functional tends to the Alt-Caffarelli functional
\[
\int [|\nabla v_0|^2 + c \, 1_{\{v_0>0\} } ] \, dx \, .
\]
After rescaling the extremals by defining $H_\eps(x) = \eps u(x/\eps)$,
the formal limit  as $\eps \to 0$ is $H_0$ satisfying
$2\Delta H_0 = c\delta(H_0)$.   This time the function $H_0$ 
is harmonic, but not necessarily constant in the positive
and negative phases $H_0 > 0$ and $H_0 < 0$.  
The set $S : = \de \{x: H_0(x) >0\}$ is known as the free
boundary.  If $S$ is smooth, then the 
condition across $S$ corresponding to the 
equation $2\Delta H_0 = c\delta(H_0)$ is derived
formally in \cite{CS} by considering
the case $n=1$ and the equation 
\[
(d/dx) (u')^2  = 2u'' u' = c f(u) u'
\]
From this one obtains on $\de \{x: H_0(x) >0\}$ 
\[
|\nabla H_0^+|^2  - |\nabla H_0^-|^2 = c \   \mbox{ on }  \ S. 
\]
In the so-called one-phase case, we have $H_0\ge 0$.
In other words the negative phase is trivial ($H_0\equiv 0$ on $H_0 \le 0$).
The jump condition is then written
\[
|\nabla H_0^+|^2 = c \   \mbox{ on }  \ S. 
\]
The main goal of regularity theory is to show that $S$ is 
as smooth as possible and that the jump (free boundary) condition 
is valid.

In \cite{DS}, De Silva proved that the one phase free boundary graphs
are smooth.  In \cite{DSJ}, we proved the quantitative version
analogous to the theorem of Bombieri, De Giorgi and Miranda
in the minimal surface case.  

\begin{theorem} \label{thm:gradboundDSJ} (De Silva and Jerison \cite{DSJ})
Let $\f\in C^\infty(\R^{n-1})$, $\f(0)=0$, and suppose that 
$S =\{(x,\f(x)): |x|<1\}$ is a one-phase free boundary, that is, 
there is a positive harmonic function $H$ on $\{(x,y): y > \f(x), \ |x|<1\}$ 
such that $H=0$ and $|\nabla H| = 1$ on $S$.  
If $|\f| \le M$ on $|x|<1$, then there is a constant $C$ depending only
on $M$ and $n$ such that 
\[
\max_{|x|\le 1/2} |\nabla \f| \le C.
\]
\end{theorem}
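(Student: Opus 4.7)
The plan is to adapt the deformation strategy of Section 4 to the one-phase free boundary setting. Normalize so that $\f(0)=0$, let $\nu$ denote the upward unit normal to $S$ pointing into the positive phase $\{H>0\}$, and for small $\eps>0$ form the vertical translate $S^\eps = S + (0,\eps)$. For a positive function $w$ on $S$ still to be constructed, define the normal deformation
\[
S_t := \{P + tw(P)\nu(P) : P \in S\}.
\]
Showing that $S_t$ stays strictly below $S^\eps$ within a fixed ball $B_\beta$ for every $0 \le t \le c\eps$, where $c=c(M,n)>0$ and $w\ge c$ on $S\cap B_\beta$, is equivalent to a uniform lower bound on the vertical distance from $S$ to $S^\eps$, and by scaling and iteration yields \eqref{eq:gradbound} with $|\nabla \f|\le C$ on $B_{1/2}$.

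The role played by the Jacobi operator $\Delta_S + |A_S|^2$ in the minimal surface case is taken here by the linearization of the one-phase condition $|\nabla H^+|^2 = 1$. When $S$ is deformed by $tw\nu$, the harmonic function $H$ is perturbed to a function $H_t$, harmonic above $S_t$ and vanishing on $S_t$, and to first order in $t$
\[
|\nabla H_t(P_t)|^2 = 1 + t\,\mathcal{L}w(P) + O(t^2),
\]
where $\mathcal{L}$ is an elliptic operator on $S$ built from the Dirichlet-to-Neumann map of the positive phase together with curvature and normal-derivative terms of $H$. The deformation maximum principle (Proposition \ref{prop:weakmaxprin}) then guarantees that if $\mathcal{L}w > 0$ on $S\cap B_\beta$, then $S_t$ and $S^\eps$ cannot touch from below in this range of $t$ without violating the jump condition.

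Following the template of Section 4, I would build $w$ in two pieces. Combining $|\f|\le M$ with a standard test-function estimate for $H$ above $S$ (using harmonicity of $H$ and $|\nabla H|=1$ on $S$) yields an $L^1$-type control $\int_{S\cap B_\beta}|\nabla_S \f|\,d\sigma \le C(M,n)$, so by Chebyshev's inequality there is a good set $G \subset S\cap B_\beta$ of substantial measure on which $|\nabla \f|$ is bounded; set $w \equiv 1$ on $G$. On the complement, define $w$ to be the boundary trace on $S$ of a positive harmonic function $\widetilde{w}$ in the domain $\{H>0\}$ with data matching $1$ on $G$ and vanishing appropriately at $\partial S \cap \partial B_\beta$. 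The $L^\infty$ bound on $\f$ makes the positive phase NTA with constants depending only on $M$ and $n$, so the boundary Harnack principle of Aguilera--Caffarelli--Spruck applies to ratios of positive harmonic functions vanishing on $S$.

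The main obstacle, serving the role of Theorem \ref{thm:HarnackBG}, is the quantitative lower bound $w\ge c>0$ on all of $S\cap B_\beta$, with $c=c(M,n)$. The plan is to compare $\widetilde w$ with a reference positive harmonic function in $\{H>0\}$ whose boundary data is supported on $G$; since $G$ carries a definite fraction of the surface measure, Carleson's estimate together with the boundary Harnack principle transports the pointwise positivity from $G$ to all of $S\cap B_\beta$. Once this is in hand, a small perturbation of $w$ secures $\mathcal{L}w>0$ strictly on $S\setminus G$, and the touching-time argument concludes exactly as in Section 4: if $t^*\le c\eps$ were the first time some $P_{t^*}\in S_{t^*}$ met $S^\eps$, touching from below would force $|\nabla H_{t^*}(P_{t^*})|^2\le 1$, contradicting $\mathcal{L}w>0$. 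This delivers the gradient bound at $0$, and the same argument at each point of $B_{1/2}$ completes the proof.
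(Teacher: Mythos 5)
Your high-level strategy matches the paper's: establish NTA of the positive phase (De Silva), invoke the boundary Harnack principle (Theorem \ref{thm:boundaryHarnackJK}), and run the normal-perturbation touching argument from Section 4 with the Bombieri--Giusti Harnack inequality replaced by the boundary Harnack inequality. The linearization of the one-phase jump condition in the role of the Jacobi operator is also the right idea.

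However, the construction of $w$ is where the proposal breaks down, and it is a genuine gap rather than just loose wording. You propose to take $w$ to be the boundary trace of a positive harmonic function $\widetilde w$ in $\{H>0\}$ ``with data matching $1$ on $G$.'' First, the Dirichlet problem is underdetermined as stated: you prescribe data only on $G$ and at $\partial S\cap\partial B_\beta$, but not on $(S\setminus G)\cap B_\beta$ or on the lateral boundary $\{H>0\}\cap\partial B_\beta$. More seriously, the boundary Harnack principle (Theorem \ref{thm:boundaryHarnackJK}) compares two positive harmonic functions that \emph{both vanish} on $B_r\cap\partial\O$; your $\widetilde w$ is prescribed to equal $1$ on a piece of $S$, so it is not in the class to which the theorem applies, and the comparison with a ``reference'' harmonic function supported on $G$ cannot be executed with this tool. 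If you instead set $\widetilde w\equiv 0$ on $(S\setminus G)\cap B_\beta$ to make the problem well posed, then its trace on $S\setminus G$ is identically zero and you get no lower bound at all. The quantity that \emph{can} be controlled by boundary Harnack, in the role played by the intrinsic Harnack in the minimal-surface argument, is a ratio such as $\widetilde w/H$ for a $\widetilde w$ vanishing on $S$, and hence a comparison of normal derivatives on $S$ (recall $|\nabla H|=\partial_\nu H=1$ there). That is the mechanism the paper is pointing to; the ``trace'' formulation cannot produce the uniform lower bound $w\ge c>0$ on $S\cap B_\beta$. A secondary inaccuracy: the NTA property of $\{y>\f(x)\}$ does not follow from the $L^\infty$ bound on $\f$ alone --- it uses the one-phase free boundary condition and is the content of De Silva's result cited in the paper.
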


The proof of Theorem \ref{thm:gradboundDSJ}
 starts with a quantitative
connectivity property of the positive phase known as non-tangential accessibility.
Recall that an open set $\O\subset \R^n$ is {\em non-tangentially accessible (NTA)}
if there is a constant $C$ such that the following two properties hold.

1.  (Harnack chain condition) For every $M$, there is $N$ such that for every $r>0$, if
$|x_1-x_2|\le Mr$, $B_r(x_1)\subset \O$, and $B_r(x_2)\subset \O$,
then there are points, $y_k$, $k=0,\dots, N$ such
that 
\[
y_0 = x_1,\ y_N= x_2, \quad B_{r/C}(y_k)\subset \O, \ \mbox{and} \ 
|y_k -y_{k+1}| \le r/C.
\]

2.  (Corkscrew condition)  For every $r>0$ and every $x\in \de \O$, 
there  are points $y_1\in \O$ and $y_2\in \R^n\setminus \O$ such that
\[
B_r(y_1) \subset \O,\quad B_r(y_2)\subset \R^n\setminus \O, \quad 
|y_k- x| \le Cr.
\]

Property 1 is uniform connectivity at  scale $r$ and distance
$r$ from the boundary.  It implies that positive harmonic functions 
at this scale and distance from the boundary satisfy the ordinary
Harnack comparability property.  Property
2 in the interior provides a ball of size comparable to $2^k$ in $\O$ 
that is at distance $2^k$ from $x\in \de \O$ for every $k$.
With Property 1, this implies a kind of
non-tangential accessibility of the boundary resembling
an interior cone condition, but, 
as the name corkscrew was intended to suggest, the path may 
be a twisty one rather than a cone.

In \cite{DS} (see also \cite{ACS} for a version with a constraint)
De Silva proved that the set $\{(x,y): y > \f(x)\}$ of Theorem
\ref{thm:gradboundDSJ} satisfies the NTA property in the range
$|x| < 1/2$ and $r< 1/C$.   Next one deduces a
boundary Harnack inequality, proved in \cite{JK}.
This boundary 
Harnack inequality plays the same role in
the proof as the Harnack inequality for
the Laplace-Beltrami operator on minimal surfaces plays
in the proof of the analogous result for minimal surfaces.  
Here is a statement of the theorem in the global case.

\begin{theorem}\label{thm:boundaryHarnackJK} (Boundary Harnack Principle 
\cite{JK})  Suppose that $\O$ is a non-tangentially accessible
domain in $\R^n$.  There is a constant $C$ depending only on dimension and
the NTA constants above, such that if $x_0\in \de \O$, then
for every $r>0$, if $u$ and $v$ are positive harmonic functions in $B_r\cap \O$
that vanish on $B_r \cap \de \O$, then
\[
\sup_{B_{r/C}(x_0)\cap \O} \frac{u(x)}{v(x)} \le C \inf_{B_{r/C}(x_0)\cap\O} \frac{u(x)}{v(x)}
\]
\end{theorem}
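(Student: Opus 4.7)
The plan is to reduce the theorem to two ingredients that are themselves quantitative consequences of the NTA hypothesis — a Carleson-type nontangential maximum estimate and a matching nondegeneracy estimate at corkscrew points — and then run a standard oscillation-decay iteration on the ratio $u/v$. By translation and scaling I assume $x_0 = 0$ and $r=1$, and for each boundary point $x \in \partial\Omega$ and scale $\rho > 0$ I write $A_\rho(x)$ for an interior corkscrew point, so that $B_{\rho/C}(A_\rho(x)) \subset \Omega$ and $|A_\rho(x)-x| \le C\rho$.

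\textbf{Step 1 (Carleson estimate).} First I would show that if $h \ge 0$ is harmonic in $B_1 \cap \Omega$ and vanishes on $B_1 \cap \partial\Omega$, then $\sup_{B_{1/2}\cap\Omega} h \le K\, h(A_1(0))$, where $K$ depends only on the NTA constants. The proof is by contradiction and iteration: if $h(y_0)$ is very large relative to $h(A_1(0))$, the maximum principle applied on $B_{d(y_0)/2}(y_0)$ (with $d$ the distance to $\partial\Omega$) produces a nearby point $y_1$ with $h(y_1) \ge 2h(y_0)$; the corkscrew condition keeps the iteration non-tangential to $\partial\Omega$, while the Harnack chain condition allows each $y_k$ to be joined back to $A_1(0)$ by a chain of controlled length, yielding a lower bound on $h(A_1(0))$ that can be made arbitrarily large, a contradiction. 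The same hypotheses give the complementary Harnack-chain nondegeneracy $h(A_\rho(0)) \ge c(\rho)\, h(A_1(0))$ for every $\rho \in (0,1)$, by chaining the balls about $A_1(0)$ and $A_\rho(0)$.

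\textbf{Step 2 (Oscillation decay).} Set
\[
M(\rho) = \sup_{B_\rho \cap \Omega} \frac{u}{v}, \qquad m(\rho) = \inf_{B_\rho \cap \Omega} \frac{u}{v}.
\]
Both are finite by Step 1 applied to $u$ and nondegeneracy applied to $v$. The functions $w_1 = M(\rho)v - u$ and $w_2 = u - m(\rho)v$ are nonnegative, harmonic in $B_\rho \cap \Omega$, and vanish on $B_\rho \cap \partial\Omega$, and they satisfy $w_1(A_\rho(0)) + w_2(A_\rho(0)) = (M(\rho)-m(\rho))\, v(A_\rho(0))$. So at least one of them — say $w_1$ — satisfies
\[
w_1(A_\rho(0)) \ge \tfrac{1}{2}(M(\rho)-m(\rho))\, v(A_\rho(0)).
\]
Combining the nondegeneracy of $w_1$ (Harnack chain from $A_\rho(0)$ to corkscrew points at scale $\rho/C_0$) with the Carleson upper bound on $v$ at the same scale yields $w_1(x) \ge \theta\,(M(\rho)-m(\rho))\, v(x)$ on $B_{\rho/C_0} \cap \Omega$ for a uniform $\theta > 0$. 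Hence $M(\rho/C_0) \le M(\rho) - \theta(M(\rho)-m(\rho))$, so
\[
M(\rho/C_0) - m(\rho/C_0) \le (1-\theta)\,\bigl(M(\rho)-m(\rho)\bigr).
\]

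\textbf{Step 3 (Iteration).} Iterating Step 2 on the scales $\rho_k = C_0^{-k}$ gives geometric decay of the oscillation of $u/v$, hence Hölder regularity of $u/v$ up to $B_{1/C} \cap \partial\Omega$, and in particular the two-sided bound $\sup_{B_{1/C}\cap\Omega} (u/v) \le C\, \inf_{B_{1/C}\cap\Omega} (u/v)$, which is the claimed estimate.

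\textbf{Main obstacle.} The hardest input is the Carleson estimate in Step 1: it is the only step that uses the full strength of the NTA hypothesis in a nontrivial way, marrying the corkscrew condition (to keep the blow-up iteration at a non-tangential distance from $\partial\Omega$) with the Harnack chain condition (to route each discovered blow-up point back to the reference corkscrew $A_1(0)$). The nondegeneracy and oscillation-decay steps use the same two ingredients but only in a straightforward chaining fashion; the Carleson step must quantify how badly a positive harmonic function vanishing on a possibly very rough boundary can concentrate near that boundary, and this is the step whose constants genuinely require the quantitative NTA nature of $\Omega$ rather than mere connectivity.
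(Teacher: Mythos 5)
The paper itself does not prove Theorem \ref{thm:boundaryHarnackJK}; it cites it as established in \cite{JK}, where the argument runs through harmonic measure: a Carleson estimate, a lower bound $u(x)\geq c\,u(A_r)\,\omega^x(\Delta_{cr})$, and crucially the \emph{doubling property} of harmonic measure, which are then combined to compare any two positive solutions to the kernel function. Your Carleson-plus-oscillation-decay strategy is therefore a different route from the one the paper refers to, closer in spirit to the Caffarelli-style iteration arguments (e.g.\ the De Silva--Savin short proof). That strategy is legitimate and would, if completed, give a cleaner and more portable proof. But as written it has a genuine gap in Step~2.

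The gap is the claimed implication ``Harnack-chain nondegeneracy of $w_1$ at the corkscrew scale $+$ Carleson bound on $v$ $\Rightarrow$ $w_1 \geq \theta(M-m)\,v$ on $B_{\rho/C_0}\cap\Omega$.'' The nondegeneracy you invoke only controls $w_1$ from \emph{below} at points $A_{\rho/C_0}(y)$, i.e.\ at points whose distance to $\partial\Omega$ is comparable to $\rho/C_0$. For $x$ much closer to $\partial\Omega$ it says nothing, while the Carleson estimate still only gives the fixed upper bound $v(x)\leq K\,v(A_\rho)$, not a matching decay. So the pointwise comparison $w_1(x)\geq\theta(M-m)v(x)$ does not follow near the boundary, and the maximum principle cannot close the gap because you lack a lower bound for $w_1$ on the part of $\partial B_{\rho/C_0}\cap\Omega$ close to $\partial\Omega$. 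What is missing is a boundary-growth/comparison lemma that matches the decay rates of $w_1$ and $v$ as $x\to\partial\Omega$: either (a) the Jerison--Kenig two-sided comparison $c\,u(A_\rho)\,\omega^x \leq u(x)\leq C\,u(A_\rho)\,\omega^x$, which in turn requires the doubling lemma for harmonic measure, or (b) a De Silva--Savin style maximum-principle iteration that compares $w_1$ directly to $v$ by first proving a one-sided comparison $w_1\gtrsim v$ from the corkscrew scale inward. Without supplying one of these, Step~2's oscillation-decay inequality is unproved. Step~1 (Carleson and corkscrew nondegeneracy) is correctly identified as the foundational ingredient, and Step~3 is routine once Step~2 holds, so the remaining work is to fill this single but essential lemma.
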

The reader is referred to \cite{DSJ} for the rest of the
proof of Theorem \ref{thm:gradboundDSJ}.  

We hope to have illustrated by these several instances that quantitative 
connectivity can play an important role in regularity theory.  Thus we
will seek a few more versions of it.  

First, note that it's an easy exercise to show that if $\O$ is
an NTA domain in all of $\R^n$, then the distance between points along paths
restricted to $\O$ is comparable to the distance along straight lines.
In the case of area-minimizing surfaces, we can prove two other versions
of quantitative connectivity.  

\begin{theorem} \label{thm:NTAminimal}  
Suppose $E\subset \R^n$ is an open subset such that $\de E$ has least area.
Then $E$ and $\R^n\setminus E$ are NTA domains. In particular,
the intrinsic distance in $E$ is comparable to the straight line distance.
\end{theorem}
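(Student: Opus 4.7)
The plan is to verify both NTA axioms — corkscrew and Harnack chain — for $E$, noting that the argument transfers verbatim to $\R^n\setminus E$ because area minimality is symmetric in the two phases. Both axioms rely on two standard consequences of area minimization: the two-sided density estimate
\[
|E \cap B_r(x)| \ge c_n r^n, \qquad |B_r(x)\setminus E| \ge c_n r^n \quad \text{for every } x\in \de E,
\]
together with the area bound $H^{n-1}(\de E \cap B_r(x)) \le C_n r^{n-1}$, both of which follow from comparison with the minimality condition applied to suitable competitors $F$.

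The corkscrew condition would follow from a pigeonhole: if no ball of radius $\eps r$ inside $B_r(x)\cap E$ existed, every point of $E\cap B_{r/2}(x)$ would lie within $\eps r$ of $\de E$, and the Minkowski-content bound for rectifiable sets of controlled $H^{n-1}$ mass would give $|E \cap B_{r/2}(x)| \le C_n \eps r^n$, contradicting the density bound once $\eps = \eps_n$ is small; the exterior version is identical with complements swapped. For the Harnack chain, given $B_r(x_1), B_r(x_2) \subset E$ with $|x_1-x_2| \le Mr$, I would build the chain along the segment $[x_1,x_2]$: wherever an $r/4$-tube around the segment lies entirely in $E$, balls along the tube enter the chain directly, and wherever it meets $\de E$ I would replace the bad piece by a detour through an interior corkscrew ball produced at a nearby boundary point and recur at half scale, the procedure terminating after $N(M,n)$ steps provided $E$ does not develop arbitrarily thin necks at scales below $r$. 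Once Harnack chains exist, the comparability of intrinsic to Euclidean distance in $E$ follows by applying the chain along a covering of $[x_1,x_2]$ by balls of radius comparable to the local distances $d(x_1,\de E)$ and $d(x_2,\de E)$.

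The main obstacle is the termination statement in the preceding paragraph: ruling out arbitrarily thin necks quantitatively. Theorem \ref{thm:connectivity-ADG} (Almgren--De Giorgi) gives non-decomposability, hence connectedness of $E$, but only qualitatively. To upgrade this to quantitative form, I would argue by contradiction: a neck of thickness $r/K$ inside $E$ separating $x_1$ from $x_2$ forces $\de E$ to have two flaps within distance $r/K$ across the neck, and slicing the tube about $[x_1,x_2]$ along a suitable level set $\Sigma$ of $d(\cdot,\de E)$ partitions $B_r \cap \de E$ into two sets $S_1,S_2$ whose common $(n-2)$-dimensional reduced boundary is contained in $\Sigma\cap \de E$ and therefore has total $H^{n-2}$-measure at most $C r^{n-2}/K^{1-1/(n-1)}$ by a co-area estimate. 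Feeding this into Theorem \ref{thm:isoperimetricDG} forces $\min_k H^{n-1}(S_k)$ to be of order $r^{n-1}/K^{(n-2)/(n-1)\cdot (n-1)/(n-1)}$ and hence arbitrarily small as $K \to \infty$, whereas the density estimate at $x_1$ and $x_2$ forces both $H^{n-1}(S_k) \gtrsim r^{n-1}$ once $K$ is large — a contradiction. The delicate point will be choosing the slicing surface $\Sigma$ carefully in the presence of possible singularities of $\de E$ inside the pinch region, and tracking the constants so that they depend only on $M$ and $n$.
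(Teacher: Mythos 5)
Your proposal is correct in spirit but takes a genuinely different route from the paper for the hard part (the Harnack chain), and the different route carries a delicacy that the paper's argument is designed to avoid.

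The paper does not prove the theorem in full (it defers the details elsewhere), but its outline is explicit: the key new ingredient is Lemma~\ref{lem:leastperimeter}, a relative isoperimetric inequality for partitions of $E$ \emph{itself}, in dimension $n$:
\[
P(E_1,E\cap B_r) \ge c\, \min_k |E_k \cap B_{\alpha r}|^{(n-1)/n}, \qquad E = E_1\cup E_2,\ |E_1\cap E_2|=0,
\]
which is the dimension-$n$ analogue of De Giorgi's surface inequality (Theorem~\ref{thm:isoperimetricDG}, your main tool). The paper then feeds this into the David--Semmes machinery (who already showed that perimeter minimizers are John domains, hence have the corkscrew property you re-derive by pigeonhole). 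Lemma~\ref{lem:leastperimeter} is exactly the right statement for ruling out thin necks: a neck is precisely a partition of $E\cap B_r$ with small relative perimeter (the neck cross-section) but both pieces of large volume, and the lemma forbids that directly. Your approach instead tries to see the neck in $\de E$: you slice $S=\de E\cap B_r$ transversally and apply Theorem~\ref{thm:isoperimetricDG}. This can probably be made to work, but it is a detour: you must produce a transverse slice of small $(n-2)$-measure through a region where $\de E$ may be singular, arrange that the two resulting pieces of $S$ each carry $\gtrsim r^{n-1}$ of surface measure (which needs a boundary density estimate applied at points near, not at, the interior points $x_1,x_2$), and track the exponents through a coarea argument. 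You flag this as ``the delicate point,'' and it is; the dimension-$n$ Lemma~\ref{lem:leastperimeter} sidesteps all of it. Two smaller remarks: your slicing surface $\Sigma$ cannot be a level set of $d(\cdot,\de E)$, since that function vanishes identically on $\de E$ -- you want a coarea argument with a coordinate function along the segment $[x_1,x_2]$; and the exponent $K^{1-1/(n-1)}$ you write for the slice measure appears ad hoc and should be rederived once the slicing is set up precisely. If you want to pursue your route, you should also compare the slices you produce with the reduced boundary $\de_* S_k$ relative to $S$ as Theorem~\ref{thm:isoperimetricDG} requires, which is an additional point to check across singular strata. The paper's approach via Lemma~\ref{lem:leastperimeter} buys a cleaner, dimensionally natural statement and lets one quote David--Semmes for the John/corkscrew part; your approach buys nothing additional here but would be a workable alternative if carried out carefully.
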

This theorem is announced here for the first time, but the details of
the proof will be published elsewhere.  The first main
ingredient is an isoperimetric inequality in dimension $n$ of
the form
\begin{lemma}\label{lem:leastperimeter}
Suppose $E\subset \R^n$ is an open subset such that $\de E$ has least area.
For every pair of sets $E_k$, $k=1, \ 2$ of locally finite perimeter such
that $E = E_1 \cup E_2$, $|E_1\cap E_2| = 0$, we have
\[
P(E_1,E\cap B_r)= P(E_2,E\cap B_r) \ge c\, \min_k |E_k \cap B_{\a r}|^{(n-1)/n}
\]
for some dimensional constants $c>0$ and $\a >0$.
\end{lemma}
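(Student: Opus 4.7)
The equality $P(E_1,E\cap B_r)=P(E_2,E\cap B_r)$ is immediate from $E=E_1\cup E_2$ with $|E_1\cap E_2|=0$: both relative perimeters equal $H^{n-1}$ of the common interface $\de^*E_1\cap E\cap B_r=\de^*E_2\cap E\cap B_r$, a quantity I will call $I$. By relabeling, assume $|E_1\cap B_{\a r}|\le |E_2\cap B_{\a r}|$, set $m(\rho):=|E_1\cap B_\rho|$, and write $M:=m(\a r)$; the task is $I\ge cM^{(n-1)/n}$.

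The plan combines three ingredients. First, for a parameter $\rho\in(\a r,r)$ to be chosen, apply the Euclidean isoperimetric inequality to the truncation $A_\rho:=E_1\cap B_\rho$:
\[
m(\rho)^{(n-1)/n}\le C_n P(A_\rho,\R^n)= C_n\bigl[I_\rho+J_\rho+m'(\rho)\bigr],
\]
where $I_\rho:=H^{n-1}(\de^*E_1\cap E\cap B_\rho)\le I$ is the inner-interface piece, $J_\rho:=H^{n-1}(\de^*E_1\cap\de^*E\cap B_\rho)$ is the part of $\de E_1$ supported on $\de E$, and $m'(\rho)=H^{n-1}(E_1\cap\de B_\rho)$ is the slice term. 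Second, the least-area hypothesis on $\de E$ controls $J_\rho$. With competitor $G:=E\setminus B_\rho$, which agrees with $E$ outside $B_\rho\subset B_r$, the inequality $P(E,B_{2r})\le P(G,B_{2r})$ simplifies, after cancellation of the contribution from $B_{2r}\setminus\bar B_\rho$, to
\[
H^{n-1}(\de^*E\cap B_\rho)\le H^{n-1}(E\cap\de B_\rho),
\]
and since $J_\rho\subseteq\de^*E\cap B_\rho$ this yields $J_\rho\le H^{n-1}(E\cap\de B_\rho)$. Third, coarea and averaging produce a good radius: since
\[
\int_{\a r}^r\bigl[H^{n-1}(E\cap\de B_\rho)+m'(\rho)\bigr]\,d\rho\le 2|B_r|\le C_n'r^n,
\]
there is $\rho_0\in(\a r,r)$ at which the bracketed sum is at most $C_n''r^{n-1}$. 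Combining the three steps at $\rho_0$ yields the core inequality
\[
M^{(n-1)/n}\le C\bigl(I+r^{n-1}\bigr).
\]

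This immediately gives the lemma whenever the volume fraction $M/r^n$ exceeds a dimensional threshold, namely when $M^{(n-1)/n}\ge 2Cr^{n-1}$. The delicate case, and the main obstacle, is the \emph{small-volume regime} $M\ll r^n$, where the $r^{n-1}$ error swamps the estimate. The remedy I would pursue is to rescale. By Lebesgue differentiation, any density-one point $x_0\in E_1$ admits a scale $s_0$ at which the local volume fraction $|E_1\cap B_{s_0}(x_0)|/s_0^n$ is a dimensional constant; choose $x_0\in E_1\cap B_{\a r}$ via a Vitali-type maximal argument applied to $\rho\mapsto|E_1\cap B_\rho(x_0)|/\rho^n$ so that $|E_1\cap B_{\a s_0}(x_0)|$ captures a fixed fraction of $M$. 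Rerunning the three-step argument centered at $x_0$ at scale $s_0$, the slice error $s_0^{n-1}$ is now comparable to $|E_1\cap B_{\a s_0}(x_0)|^{(n-1)/n}$ and can be absorbed into the left-hand side. The monotonicity $P(E_1,E\cap B_{s_0}(x_0))\le I$ then yields $I\ge cs_0^{n-1}\ge c'M^{(n-1)/n}$. Keeping this density/rescaling argument quantitative with dimensional constants is the only nontrivial piece; the area-minimality of $\de E$ enters the argument only through the short competitor calculation in the second ingredient, and everything else is standard BV bookkeeping.
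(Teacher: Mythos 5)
The paper itself does not give a proof of this lemma; it merely says the argument follows De Giorgi's method for Theorem~5.5 one dimension up, so I am comparing your proposal against that hint and against internal consistency.

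Your three-step ``core estimate'' $M^{(n-1)/n}\le C(I+r^{n-1})$ is correct, and the use of the competitor $G=E\setminus B_\rho$ to control $J_\rho$ by the slice is a legitimate use of area-minimality. The difficulty is exactly where you flag it, but your proposed remedy does not close the gap. Concretely: if $|E_1\cap B_{s_0}(x_0)|=\theta\omega_n s_0^n$ with $\theta$ a fixed dimensional constant, the Euclidean isoperimetric inequality gives
\[
(\theta\omega_n)^{(n-1)/n}\,s_0^{n-1}\;\le\;C_n\bigl(I_{s_0}+J_{s_0}+m'(s_0)\bigr),
\]
and the slice term satisfies only $m'(s_0)\le n\omega_n s_0^{n-1}$, so the best generic bound on the error is $C_n\, m'(s_0)\le \omega_n^{(n-1)/n}s_0^{n-1}$. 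Since $\theta\le 1$, the factor $(\theta\omega_n)^{(n-1)/n}$ never exceeds $\omega_n^{(n-1)/n}$; the left-hand side can therefore be swallowed entirely by the slice error and the inequality gives no information. ``Comparable'' is not ``a small multiple of,'' and one cannot absorb a term of the same order as the main term unless the constant in front is strictly smaller; in the small-ball regime the relevant constants do not permit this (and averaging over $\rho$ to cut $m'(\rho)$ down by the coarea trick helps only when the averaging length is comparable to $s_0$, which reintroduces the same constant). Similarly, the existence of a single $(x_0,s_0)$ that simultaneously has density bounded away from $0$ and $1$ and captures a definite fraction of $M$ is not a consequence of the Lebesgue differentiation theorem or of a ``Vitali-type'' argument: for a general finite-perimeter set $E_1$ (e.g., a union of many well-separated small pieces), no single ball can do this, and here $E_1$ is arbitrary.

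What the sketch is missing is a second, quantitatively stronger use of minimality of $\de E$. The only input you take from minimality is the $J_\rho$ bound, but to kill the $r^{n-1}$ (or $s_0^{n-1}$) error in the small-volume regime one needs the volume and perimeter density estimates for the area minimizer ($c_0\le|E\cap B_s(x)|/|B_s|\le 1-c_0$ for $x\in\de E$, and $H^{n-1}(\de E\cap B_s(x))\sim s^{n-1}$). These are what power De Giorgi's proof of the lower-dimensional estimate (Theorem~5.5 in the paper), to which the paper defers, and they are precisely what ensure that, at the critical scale, the competing volume $|B_\rho\setminus E_1|$ and the interface length both have the right size without the slice eating the main term. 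As written, your argument treats $E$ as if it were only a set of locally finite perimeter rather than an area minimizer, and that is why the small-volume case does not close. The fix is either (a) to bring in the density estimates and run a covering/iteration argument at the scale $\rho\sim M^{1/n}$ (summing contributions over a family of balls rather than one), or (b) to replace the Euclidean isoperimetric inequality by a relative isoperimetric inequality in $E\cap B_\rho$, which has no slice term --- but the latter is essentially the John-domain statement of David--Semmes, which the paper explicitly treats as a separate ingredient and would render your proof circular in this context.

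Two minor points. First, the competitor $E\setminus(E_1\cap B_\rho)$ yields the sharper bound $J_\rho\le I_\rho+m'(\rho)$, which is slightly better than $J_\rho\le H^{n-1}(E\cap\de B_\rho)$ but does not change the structure of the obstruction. Second, your opening claim $P(E_1,E\cap B_r)=P(E_2,E\cap B_r)$ is correct but should be justified by noting that inside the open set $E$, $\de^*E_1$ and $\de^*E_2$ coincide $H^{n-1}$-a.e.\ because $E_1,E_2$ partition $E$ up to measure zero.
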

This lemma is proved by the same method in dimension $n$ 
as opposed to dimension $n-1$ as the isoperimetric estimate
of De Giorgi (Theorem \ref{thm:isoperimetricDG}).  It seems likely
to have been known to De Giorgi.  This lemma can then be combined
with techniques and results of David and Semmes 
(see (6.18) page 251 of \cite{DS2}; Lemma  6.14 of \cite{DS2} p. 250;
Theorem 1.20, Proposition 1.18 and Defintition 1.16 of \cite{DS1})
to prove  Theorem \ref{thm:NTAminimal}. Note that David and Semmes
already proved a significant part of the result by showing that $E$ is what is known
as a John domain.

\begin{theorem} \label{thm:intrinsicdist} (G. David and D. Jerison)
Let $E\subset \R^n$ be an open subset such that $\de E$ has least area.
Then the intrinsic distance along paths in $\de E$ is comparable
to the extrinsic distance along straight lines in $\R^n$. 
\end{theorem}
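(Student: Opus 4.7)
The plan is to combine the two-sided NTA property from Theorem \ref{thm:NTAminimal} with Ahlfors regularity of $\de E$, and then to apply the David--Semmes machinery cited in the proof of that theorem to conclude that $\de E$ has big pieces of Lipschitz graphs at every scale. For an Ahlfors regular set with this property, intrinsic distance is known to be comparable to extrinsic distance, which would yield the theorem. Fix $p,q\in \de E$ with $|p-q|=r$; the objective is to produce a rectifiable curve $\gamma\subset \de E$ joining them with $H^1(\gamma)\le C_n r$.

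The first step is Ahlfors $(n-1)$-regularity: $H^{n-1}(\de E \cap B_\rho(x)) \approx \rho^{n-1}$ uniformly for $x\in \de E$ and $\rho>0$. The upper bound is the classical competitor argument (replace $E$ by $E\cup B_\rho(x)$ or $E\setminus B_\rho(x)$ and use area-minimality). The lower bound follows from Lemma \ref{lem:leastperimeter} applied with $E_1 = E\cap B_\rho(x)$ and $E_2 = E\setminus B_\rho(x)$: the interior and exterior corkscrew balls supplied by Theorem \ref{thm:NTAminimal} ensure that both $|E_k \cap B_{\a\rho}|$ are a definite fraction of $\rho^n$, and the relative isoperimetric inequality of the lemma converts this into the desired perimeter lower bound.

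The second step is a corona/stopping-time decomposition in the style of David--Semmes (Theorem 1.20, Proposition 1.18 and Definition 1.16 of \cite{DS1}, and (6.18) and Lemma 6.14 of \cite{DS2}). At each pair $(x,\rho)$ with $x\in \de E$, either $\de E$ is sufficiently close to a hyperplane in $B_\rho(x)$ that the Almgren--De Giorgi $\e$-regularity theorem identifies it as a Lipschitz graph with controlled slope on $B_{\rho/2}(x)$, in which case the scale is declared ``good'' and a Lipschitz piece is extracted; otherwise one subdivides and iterates. To connect $p$ to $q$, cover a straight tube of radius $\sim r$ from $p$ to $q$ with dyadic pieces; by the two-sided corkscrew and Harnack chain property, $\de E$ cannot escape this tube. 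Concatenating the good Lipschitz pieces along the tube produces $\gamma$.

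The main obstacle is controlling the total length contributed by bad scales near the singular set of $\de E$, which in dimension $n\ge 8$ may have Hausdorff dimension up to $n-8$. The required estimate is that accumulated detours around singularities add only $O(r)$ to $H^1(\gamma)$. I expect the quantitative control to emerge from combining the area upper bound of Step 1 — which caps the total $H^{n-1}$-mass of bad pieces — with a lower density estimate forcing each bad piece at scale $\sigma$ to carry at least $c\sigma^{n-1}$ of area, hence at most $C(r/\sigma)^{n-1}$ bad pieces of scale $\sigma$ along the tube. Summing the contributions as $\sigma$ ranges over dyadic scales, and making this uniform across all admissible codimensions of the singular set, is the delicate bookkeeping that I expect to be the crux of the argument.
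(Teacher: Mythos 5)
The paper does not actually prove this theorem: it states explicitly that the proof ``will be published elsewhere,'' and discloses only one ingredient, namely that the key step is the lower bound $H^{n-1}(S\cap \tilde B_r(x)) \ge c\, r^{n-1}$ for \emph{intrinsic} metric balls $\tilde B_r(x)$ on $S=\de E$. That is a genuinely different engine from yours: once one knows that intrinsic balls carry Ahlfors-regular mass, the comparison of intrinsic and extrinsic distance can be run directly against the (easy) extrinsic upper density bound, with no need to build paths by gluing Lipschitz pieces. Your proposal never touches the intrinsic-ball density estimate, which is the part the paper singles out as the main lemma.

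The more serious problem is that your reduction rests on a claim that is not true in general. You assert that for an Ahlfors regular set with big pieces of Lipschitz graphs (i.e.\ a uniformly rectifiable set), ``intrinsic distance is known to be comparable to extrinsic distance.'' This is false: uniform rectifiability is a measure-theoretic, not a metric-topological, property, and it is entirely compatible with long detours. A connected Ahlfors regular curve in $\R^2$ built from finitely many line segments can be uniformly rectifiable while joining two points at Euclidean distance $1$ only by a path of length $100$ (e.g.\ by inserting a long thin finger of width $\eps$); no corona/BPLG statement from \cite{DS1,DS2} rules this out. What rules it out here is the extra geometric input of the problem -- two-sided NTA and, crucially, area-minimality -- but your argument never converts those hypotheses into a lower bound on the length of short paths. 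Relatedly, the claim that ``$\de E$ cannot escape this tube'' by the Harnack chain and corkscrew conditions is unsupported: the Harnack chain property constrains the bulk of $E$, not paths inside the hypersurface $\de E$, and $\de E$ certainly extends far outside any bounded tube. What you need (and do not prove) is that some \emph{connected component} of $\de E$ within a tube of radius $\sim r$ joins $p$ to $q$; this is essentially equivalent to the statement being proved. So as written the argument is circular at its central step, and the bookkeeping around the singular set, while a real issue, is not the crux.
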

This theorem, announced here for the first time,
is due to Guy David and the present author.  The proof will be published elsewhere.
The theorem implies, in particular, that if one
uses balls in the intrinsic distance on $S$, one can take
$\beta = 1$ in Theorems \ref{thm:isoperimetricDG} and \ref{thm:isoperimetricDG}.
Set $S = \de E$.  The main lemma used to prove Theorem \ref{thm:intrinsicdist}
 is that for every $x\in S$,
\[
H^{n-1}(S\cap \tilde B_r(x)) \ge c r^{n-1}.
\]
where $\tilde B_r(x)$ is the ball around $x$ of radius $r$ in the  {\em intrinsic} distance
on $S$. 

Guy David and the author are working on quantitative versions of
the results of Sternberg and Zumbrun that should apply to
isoperimetric sets in convex domains as well as level sets
of stable solutions to semilinear elliptic equations.
We have not yet formulated (much less proved) the appropriate 
Harnack inequalities in the setting of semilinear equations.  
On the other hand, the mechanism for deformation arguments
should be valid in this generality.  The notion of comparison
of level sets of solutions is already present and use extensively
in the work of Caffarelli on free boundary regularity.  The
basis for it is the following deformation maximum principle.

\begin{prop} \label{prop:weakmaxprin}
Suppose that  $U$ is a bounded, open set, $u$ is
a smooth function on $\bar U$, and $v_t$, $0\le t \le 1$ 
is a continuous family of smooth functions on $\bar U$ satisfying
$\Delta u = f(u)$ and $\Delta v_t >f(v_t)$ on $U$. 
If $v_0 < u$ on $\bar U$ and $v_t < u$ on $\de U$ 
for all  $0 \le t \le 1$, then $v_t < u$ on $U$ for all $0 \le t \le 1$. 
\end{prop}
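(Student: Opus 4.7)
The plan is a continuity (open--closed) argument in the parameter $t$, with the strict differential inequality $\Delta v_t > f(v_t)$ supplying the strict maximum principle that prevents interior touching of $v_t$ to $u$ from below. I set
\[
T = \{t \in [0,1] : v_s < u \text{ on } \bar U \text{ for every } s \in [0,t]\},
\]
and $t^* = \sup T$. By hypothesis $0 \in T$, so $T$ is nonempty and the goal is to show $t^* = 1$ and $t^* \in T$.

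The key step is to verify that $v_{t^*} < u$ on all of $\bar U$. Continuity of the family in $t$ (uniformly on $\bar U$) combined with $v_s < u$ for $s < t^*$ gives $v_{t^*} \le u$ on $\bar U$, with strict inequality on $\de U$ by hypothesis. Setting $w = u - v_{t^*}$, I compute
\[
\Delta w \;=\; f(u) - \Delta v_{t^*} \;<\; f(u) - f(v_{t^*}) \quad \text{on } U.
\]
If $w$ vanished at an interior point $x_0 \in U$, it would attain there an interior minimum of the nonnegative function $w$, forcing $\Delta w(x_0) \ge 0$; but $u(x_0) = v_{t^*}(x_0)$ makes $f(u(x_0)) = f(v_{t^*}(x_0))$, and the displayed strict inequality yields $\Delta w(x_0) < 0$, a contradiction. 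Hence $w > 0$ on $\bar U$, so $t^* \in T$.

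To close the argument I would open up the endpoint. Since $w > 0$ on the compactum $\bar U$, there is $\delta > 0$ with $w \ge \delta$; continuity of $t \mapsto v_t$ (uniform on $\bar U$) then yields $u - v_t > \delta/2$ on $\bar U$ for every $t$ in some neighborhood of $t^*$. If $t^* < 1$, this would produce elements of $T$ beyond $t^*$, contradicting its definition, so $t^* = 1$ and the conclusion follows. The main subtlety, rather than a true obstacle, is a warning about hypotheses: the argument uses the strict inequality $\Delta v_t > f(v_t)$ in an essential way. With mere equality one cannot exclude an interior touching point from the sign of $\Delta w$ alone; one would have to invoke the Hopf strong maximum principle, which in turn would require a Lipschitz (or comparable) assumption on $f$. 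The strict subsolution property of the family $v_t$ is precisely what allows the ordering $v_t < u$ to propagate in $t$, justifying the name \emph{deformation maximum principle}.
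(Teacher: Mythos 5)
Your proof is correct and is essentially the paper's own argument: both examine the first parameter value at which $v_t < u$ could fail, locate an interior touching point $x_0$ of $w = u - v_t$ (interior because of the boundary hypothesis), and derive the contradiction $f(u(x_0)) = f(v_t(x_0)) < \Delta v_t(x_0) \le \Delta u(x_0) = f(u(x_0))$ from the strict inequality and the interior-minimum condition $\Delta w(x_0)\ge 0$. The paper states this more tersely as a direct contradiction at the first failure time, while you wrap the same step in an explicit open-and-closed-in-$t$ framing, but the content is identical.
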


Proof:  If $t>0$ is the smallest value for which $v_t < u$ fails, then
there is $x_0\in U$ such that $v_t(x_0) = u(x_0)$ and by continuity
in $t$, $v_t(x) \le u(x)$ for all $x\in \bar U$.   But then
\[
f(u(x_0)) = f(v_t(x_0)) < \Delta v_t(x_0) \le \Delta u(x_0) = f(u(x_0))
\]
which is a contradiction.

It is important to realize that quantitative connectivity results
like Theorems \ref{thm:isoperimetricDG}, 
\ref{thm:isoperimetricBG}, \ref{thm:NTAminimal},
\ref{thm:intrinsicdist}, and \ref{thm:HarnackBG} apply
in all dimensions to area mininizers like the Simons cone.
To get further regularity, one must have a stronger hypothesis
that, at a minimum, rules out the Simons cone.   In
our opinion, the right place to look is isoperimetric problems
in convex domains, which we discuss in the next section.

\section{The two hyperplane conjecture}

Here we formulate conjectures saying under what circumstances a
cone condition and/or flatness holds and discuss many interrelated conjectures.  
We are also interested in flatness from one side.   A theorem
of Miranda (see Theorem 7 \cite{BG}) says that an area-minimizing
hypersurface in a half space must be a plane.

We begin with a conjecture about the convex hulls  of isoperimetric sets.
\begin{conjecture} \label{conj:hull}   Let $\O\subset \R^n$ be a bounded,
open, convex set.  Let $\mu = 1_\O dx $.  
For every isoperimetric
set $E$ with volume fraction $\a$, $0 < \a < 1$, we have the following.

a) $\rm{hull}(E)$ is a proper subset of $\O$.

b)  $\mu(\rm{hull}(E)) \le (1-c(\a,n))\mu(\O) $  for some $c(\a,n)>0$.

c)  The constant $c(\a,n)$ can be taken independent of the dimension $n$,
at least for $\a<1/2$. 
\end{conjecture}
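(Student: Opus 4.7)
My overall strategy is to prove (a) first in this nicest case, extend to a general convex $\O$, upgrade to (b) by a compactness argument, and then treat (c) --- which I regard as the main obstacle --- separately. Suppose for contradiction $E$ is isoperimetric with $\mathrm{hull}(E)=\O$. Every extreme point of $\O$ lies in $\overline E$, and in the strictly convex case every boundary point is extreme, so $\overline E\supseteq\de\O$. Let $F:=\O\setminus\overline E$. Then $|F|=(1-\a)|\O|$, and $F$ is itself isoperimetric at fraction $1-\a$, since it shares its relative boundary --- the CMC hypersurface $\Sigma=\de E\cap\O$ --- with $E$. I claim $\overline F\cap\de\O=\emptyset$. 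If not, pick $p$ in this intersection: both $E$ and $F$ accumulate at $p$, so $\Sigma$ extends smoothly to $p\in\de\O$ meeting it perpendicularly along a smooth $(n-2)$-dimensional submanifold, and this locally divides $\de\O$ near $p$ into an $E$-side (contained in $\overline E$) and an $F$-side locally disjoint from $\overline E$, contradicting $\de\O\subset\overline E$. Hence $F$ is relatively compact in $\O$, and by the homothety argument already recalled in the excerpt (Alexandrov forces $F$ to be a ball, which is strictly beaten by the same ball translated to touch $\de\O$ and dilated by $1+t$), this is impossible; so $\mathrm{hull}(E)\neq\O$.

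\textbf{Step 2: (a) in general and the quantification (b).}
For a general bounded convex $\O$, the argument above still rules out $F$ meeting any strictly convex portion of $\de\O$. The only remaining case is that $F$ meets a flat face $\Pi$ of $\de\O$; if $F$ meets only $\Pi$, reflecting $F$ across $\Pi$ produces a closed CMC hypersurface in the doubled body, which by Alexandrov must be a sphere, so $F$ is a half-ball perpendicular to $\Pi$ --- a shape always beaten, at the given volume, either by a corner piece at a vertex of $\Pi$ (which is excluded since vertices are extreme and lie in $\overline E$) or by a hyperplane slab cut parallel to $\Pi$. When $F$ meets several flat faces simultaneously, iterated reflection into the orbit of $\O$ under the finite group generated by those reflections, combined with an explicit hyperplane competitor inside $\O$, yields the analogous contradiction. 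For (b) I would argue by compactness: if (b) failed at some $(\a,n)$, a sequence $(\O_k,E_k)$ with $|\mathrm{hull}(E_k)|/|\O_k|\to 1$, normalized by placing each $\O_k$ in John position and rescaling to volume one, would (by Blaschke selection, lower semicontinuity of perimeter, and continuity of the convex hull in Hausdorff distance) yield a limit $(\O,E)$ with $E$ isoperimetric in $\O$ and $\mathrm{hull}(E)=\O$; the general version of (a) then closes the argument.

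\textbf{Step 3: dimension-free statement (c), the main obstacle.}
The constants produced above degrade with $n$, both through John's factor of order $n$ in the normalization and through the dimension dependence of the isoperimetric profile implicit in the compactness step. For (c) one must therefore replace compactness by a direct quantitative estimate of dimension-free flavor. My plan is to combine a Cheeger-type log-concave isoperimetric bound in the spirit of Bobkov and E.~Milman,
\[
P(E,\O)\;\gtrsim\;\frac{|E|\,(|\O|-|E|)}{|\O|\,D(\O)},
\]
with a lower bound on $P(E,\O)$ forced by $\mathrm{hull}(E)$ filling most of $\O$: for a typical direction $v$ the cross-section of $\O\setminus\mathrm{hull}(E)$ must shrink to almost nothing, which via Fubini slicing forces $\de E\cap\O$ to wrap around at least the $(n-2)$-measure of a generic cross-section of $\O$. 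The central difficulty is that sharp dimension-free versions of such estimates are essentially equivalent to the KLS hyperplane conjecture, currently known only up to the Klartag--Lehec $n^{o(1)}$ loss. I therefore expect (c), and not (a) or (b), to be the principal barrier, with a full proof of (c) likely tied to further progress on KLS.
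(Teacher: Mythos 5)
The statement you are trying to prove is labelled \emph{Conjecture} \ref{conj:hull} in the paper, and it is in fact an open conjecture: the paper offers no proof, states explicitly that the Simons cone is ``the first potential barrier to proving our very first qualitative conjecture Conjecture \ref{conj:hull}~(a),'' and accumulates only indirect evidence (the Sternberg--Zumbrun and Rosales et al.\ symmetry-breaking results, the fact that the Simons cone can be ruled out by an explicit translation, and, for part (c), the Milman argument connecting the related two-hyperplane conjecture to KLS). So there is no ``paper proof'' to compare against; instead I'll assess your attempt on its own terms.

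Your Step~1 (strictly convex case) is a nice heuristic, and the homothety argument ruling out relatively compact $F$ is exactly the one used in the paper after Conjecture~\ref{conj:logconcave.Lip}. But the step from $p\in\overline{F}\cap\de\O$ to a local decomposition of $\de\O$ into an ``$E$-side'' and an ``$F$-side'' disjoint from $\overline{E}$ requires $\Sigma=\de E\cap\O$ to be a $C^1$ hypersurface up to $\de\O$ near $p$, meeting $\de\O$ transversally along a smooth $(n-2)$-manifold. Boundary regularity of constrained isoperimetric interfaces in convex domains is only known away from a closed singular set of Hausdorff dimension at most $n-8$, and you need the clean picture at \emph{every} point of $\overline{F}\cap\de\O$, so for $n\geq 8$ this is not filled in. Step~2 has bigger problems: the group generated by reflections in several flat faces is in general infinite (the angles need not be rational multiples of $\pi$), so ``iterated reflection into the orbit of $\O$'' does not produce a closed CMC hypersurface; the vertex/corner competitor is not made precise; and the compactness argument for (b) breaks at the step ``continuity of the convex hull in Hausdorff distance,'' since $L^1$-convergence of $E_k\to E$ does not control $\mathrm{hull}(E_k)$ --- a thin tentacle of vanishing volume but persistent reach would keep $|\mathrm{hull}(E_k)|$ large while disappearing from the limit, so you cannot yet conclude $\mathrm{hull}(E)=\O$. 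Step~3 you correctly identify as out of reach; the paper makes this precise via Proposition~\ref{prop:twoimpliesone}, which shows the dimension-free two-hyperplane statement already implies the KLS hyperplane conjecture. In short: your plan is a plausible outline, but it does not constitute a proof, and parts (a)--(b) already contain genuine unresolved regularity and compactness gaps, not merely part (c).
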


A log-concave version is as follows.
\begin{conjecture} \label{conj:logconcave.hull}   Let $\mu$ be a
finite, strictly log-concave measure on $\R^n$.  For every isoperimetric
set $E$ with volume fraction $\a$, $0 < \a < 1/2$, there are convex
sets $G_1$ and $G_2$ such that $G_1 \subset E \subset G_2$ and
\[
\mu(G_1) \ge c(\a,n) \mu(\R^n), \quad \mu(G_2) \le (1-c(\a, n)) \mu(\R^n)
\]
for a constant $c(\a,n)>0$.  Moreover, $c(\a,n)$ can be chosen independent
of $n$. 
\end{conjecture}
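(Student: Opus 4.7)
\emph{Proof proposal.} The plan is to produce a direction $a\in S^{n-1}$ and two levels $t_-<t_+$ such that
\[
\{a\cdot x<t_-\}\subset E\subset \{a\cdot x<t_+\},
\]
after which $G_1:=\{a\cdot x\le t_-\}$ and $G_2:=\{a\cdot x\le t_+\}$ are convex and satisfy $G_1\subset E\subset G_2$. Because $\a\in(0,1/2)$, the required mass bounds $\m(G_1)\ge c$ and $\m(G_2)\le 1-c$ then follow provided the $\m$-measure of the slab $\{t_-\le a\cdot x\le t_+\}$ is controlled by the one-dimensional isoperimetric profile of the marginal of $\m$ in direction $a$. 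Thus the whole problem reduces to choosing a triple $(a,t_-,t_+)$ for which the slab width $t_+-t_-$ is quantitatively controlled in terms of $\a$ and, ideally, dimension-free constants.

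First I would set up regularity: for strictly log-concave $\m$, standard theory furnishes a smooth open representative whose boundary $\de E$ has constant weighted mean curvature, and connectedness of $\de E$ follows from Theorem 2.6 of Sternberg--Zumbrun \cite{StZ2} as extended to log-concave weights by Rosales et al.\ \cite{RCBM}. The natural candidate for $a$ is the direction of largest variance of $\m|_E$ (the top eigenvector of its covariance matrix), with $t_\pm$ chosen as suitable quantiles of $a\cdot x$ under $\m|_E$. Connectedness of $\de E$ then forces $E$ to be a sub-graph in the direction $a$ as soon as $\de E$ is pinned near a single level of $a\cdot x$.

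The analytic heart is the stability (second variation) inequality of Bayle for log-concave densities, i.e.\ the weighted analogue of \eqref{eq:poincare-SZ}. Plugging in $f(x)=a\cdot x-\bar a$ with $\bar a$ chosen so that $\int_{\de E} f w\,dH^{n-1}=0$, the right-hand side is bounded by $\int_{\de E} w\,dH^{n-1}$ (since $|\del_S (a\cdot x)|\le 1$), while strict log-concavity provides the coercive lower bound
\[
\int_{\de E} (a\cdot x-\bar a)^2\,\del^2 V(\nu_S,\nu_S)\,w\,dH^{n-1},
\]
forcing $\de E$ to be $L^2(w\,dH^{n-1})$-concentrated near the hyperplane $\{a\cdot x=\bar a\}$. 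Upgrading this $L^2$ flatness to the pointwise sandwich above then uses the constant weighted mean curvature equation on $\de E$ together with a boundary-Harnack type propagation of flatness, in the spirit of the deformation and Harnack arguments of Section 4.

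The hardest step, by a wide margin, is obtaining a constant $c(\a,n)$ \emph{independent of} $n$. For this I would invoke Klartag-style needle decomposition of $\m$ along affine lines parallel to $a$: on each needle $\ell$ the restriction of $\m$ is a one-dimensional log-concave probability, so $E\cap \ell$ is an initial or terminal interval by \cite{Bob}. The dimension-free sandwich then reduces to showing that the endpoints of these one-dimensional segments do not spread by more than an $\a$-dependent constant in the $a$-coordinate across the needles. This coherence statement is essentially the Two Hyperplane Conjecture, which by E.\ Milman's argument mentioned in the acknowledgments implies the KLS hyperplane conjecture; so a truly dimension-free bound will require genuinely new input beyond strict log-concavity and stability. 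A reasonable first target is the centrally symmetric case $-\m=\m$, in which central symmetry forces the hyperplane $\{a\cdot x=\bar a\}$ to pass through the origin and thereby eliminates one layer of cross-needle fluctuation.
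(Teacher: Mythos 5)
This statement is a \emph{conjecture}, and the paper offers no proof of it. Indeed, the paper goes out of its way to signal the difficulty: Proposition~\ref{prop:twoimpliesone} (the Milman argument) shows that the closely related two-hyperplane conjecture already implies the KLS hyperplane conjecture, and Section~7 offers only qualitative symmetry-breaking evidence (Theorems~\ref{thm:asymmStZ} and~\ref{thm:asymm}), not the quantitative sandwich asserted here. So there is no proof in the paper to compare your proposal against; what you have written is a research program, and you correctly flag it as such by the end. That honesty is to your credit, but the proposal is not a proof.

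On its merits, the sketch has several genuine gaps worth naming concretely. First, and most basically, strict log-concavity ($\nabla^2 V > 0$) gives no quantitative lower bound on $\nabla^2 V(\nu,\nu)$, so the ``coercive lower bound'' you extract from the stability inequality by plugging in $f(x)=a\cdot x-\bar a$ carries no uniform constant; the resulting $L^2$-concentration of $\de E$ near the hyperplane $\{a\cdot x=\bar a\}$ is vacuous without an additional hypothesis such as $\nabla^2 V\ge \kappa\,\mathrm{Id}$, which the conjecture does not grant. (Compare with the instability argument of Theorem~\ref{thm:asymm}, which only needs positivity at a single point and therefore avoids this issue --- but it is qualitative.) Second, the step from $L^2$-flatness of $\de E$ to the pointwise sandwich $\{a\cdot x<t_-\}\subset E\subset\{a\cdot x<t_+\}$ is asserted via ``constant weighted mean curvature plus boundary-Harnack propagation of flatness,'' but this is precisely the missing Steps~2 and~3 of the program in Section~3: no such flat-implies-Lipschitz theorem is yet known for stable constant weighted mean curvature hypersurfaces, as opposed to minimizers, and the paper emphasizes that formulating it correctly is itself open. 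Third, the needle-decomposition step is explicitly circular: you note yourself that coherence of the one-dimensional endpoints across needles ``is essentially the Two Hyperplane Conjecture,'' so it cannot be used as an ingredient in its own proof. Also note that the test function you choose, $f(x)=a\cdot x - \bar a$, differs from the one used in the paper's qualitative symmetry-breaking proof, which uses the normal components $f_j=e_j\cdot\nu$ together with the identity~\eqref{eq:Aidentity}; your linear choice is natural in the spectral-gap/KLS literature, but it is not the route the paper takes for the results it does prove. Finally, a smaller structural point: the conjecture only asks for convex sets $G_1\subset E\subset G_2$ with mass bounds, whereas your plan aims directly at the stronger half-space sandwich of Conjecture~\ref{conj:twohyp.infty}(b); aiming at the harder form is not wrong, but it needlessly ties the weaker statement to the full force of the two-hyperplane conjecture and hence to KLS.
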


The idea behind these conjectures is to get started at the largest scale
by finding a big convex set on each side of the isoperimetric interface.  
This is reminiscent of the ball/corkscrew condition in the NTA property
imposed at the most important scale.  Put another way,
we want to quantify the 
extent to which the isoperimetric sets are themselves almost convex.
Convexity is {\em ipso facto} a form of connectivity.

In dimension  $n=1$,
the isoperimetric sets for strictly log-concave measures 
are rays (see \cite{Bob}), so Conjecture \ref{conj:logconcave.hull} is true.
When the density is not strictly log-concave, the conclusion can fail in dimension $1$.   
As observed by Rosales et al \cite{RCBM}, for the weight $w(x)= e^{-|x|}$ on $\R$, 
which is log-concave but not strictly so, the isoperimetric sets are not unique.
Although one of them is a ray, there are other minimizers for 
volume fractions $\a \le 1/2$ are 
the union of two rays.    For such minimizers the hull of $E$ is all of $\R$.

Examples from \cite{KZ}
show that the isoperimetric sets for $\a = 1/2$ in $\R^2$ can be bounded
or the complement of a bounded set.  
We don't have a precise conjecture about how to rule out these examples,
so instead we formulate a question.  
\begin{question} \label{conj:logconcave.cone}   Are there conditions
on log-concave measures  $\mu$ supported
on all of $\R^n$ that include the case $\mu = 1_\O \, dx$ (for bounded
convex $\O$)  as a limit, for which for the following holds?
For every isoperimetric set $E$ with volume fraction $\a$ sufficiently close to $1/2$, 

a) Both $E$ and $E^c = \R^n \setminus E$ are unbounded.

b)  There is a constant $c(\a)>0$ and 
open, convex cones $\G_1\subset E$ and $\G_2 \subset E^c$ such
that  $\mu(\G_i) \ge c(\a)\mu(\R^n)$, $i = 1,\, 2$. 
\end{question}

For reasons explained above, 
related to the behavior of level sets of eigenfunctions, we are most concerned 
with the case of centrally symmetric convex bodies.   
With an eye towards constraints that may ultimately yield 
that the interface is a Lipschitz graph we conjecture that
the the interface is trapped between symmetric cones. 

\begin{conjecture} \label{conj:twohyp.infty}  
There is a constant $c>0$ such that if $\O\subset \R^n$ is
convex, bounded, and centrally symmetric, and $E$ is an isoperimetric
subset of $\O$ with volume ratio $|E|/|\O| = 1/2$, then 

a) there is a convex cone $\G$ such that 
\[
\O \cap \G \subset E, \quad \O \cap (-\G) \subset \O \setminus E,  \quad 
|\G \cap \O| \ge c |\O|.
\]

b) The cone $\G$ can be taken to be a half space.
\end{conjecture}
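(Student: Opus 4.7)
The plan is to combine the Sternberg--Zumbrun stability inequality \eqref{eq:poincare-SZ} with the central symmetry of $\O$ and an extremal choice of test direction, converting integrated stability into a pointwise slab-confinement of the interface. First, by Theorem 2.6 of \cite{StZ2}, the interface $S = \O\cap\de E$ is connected when $\de\O$ is strictly convex (with the general convex case accessed via approximation by strictly convex bodies); thus $S$ is a connected stable constant-mean-curvature hypersurface meeting $\de\O$ orthogonally, to which \eqref{eq:poincare-SZ} applies.

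For each unit vector $a$, set $Q(a) = \int_S\!\left(1 - (a\cdot\nu_S)^2\right)dH^{n-1}$, the integrated misalignment of $\nu_S$ with $a$. Let $a^{*}$ minimize $Q$ over $S^{n-1}$. Testing \eqref{eq:poincare-SZ} with $f(x) = a^{*}\cdot x - m$, where $m$ is chosen so that $\int_S f\, dH^{n-1} = 0$, and using $|\nabla_S(a^{*}\cdot x)|^2 = 1 - (a^{*}\cdot\nu_S)^2$, yields
\[
\int_S (a^{*}\cdot x - m)^2\, \|A_S\|^2\, dH^{n-1} + \int_{\bar S\cap\de\O} (a^{*}\cdot x - m)^2\, A_{\de\O}(\nu_S,\nu_S)\, dH^{n-2} \ \le\ Q(a^{*}).
\]
Averaging $Q$ over the unit sphere gives $\tfrac{n-1}{n} H^{n-1}(S)$, so certainly $Q(a^{*}) \le \tfrac{n-1}{n} H^{n-1}(S)$; the hope is that central symmetry of $\O$, combined with rigidity of stable CMC hypersurfaces in centrally symmetric convex bodies, forces a substantially sharper bound of the form $Q(a^{*}) \le \eta\, H^{n-1}(S)$ with $\eta$ small independent of $n$.

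The final step is to upgrade this $L^2$ flatness to a pointwise bound $|a^{*}\cdot\nu_S| \ge 1 - \eta'$ on $S$. Together with connectedness, this would make $S$ a Lipschitz graph of small Lipschitz constant over the hyperplane $\{a^{*}\cdot x = m\}$, hence trapped in a slab $\{|a^{*}\cdot x - m| \le \tau\}$; taking $\Gamma = \{a^{*}\cdot x > m + \tau\}$ then gives $\O\cap\Gamma \subset E$ and, by central symmetry of $\O$, $\O\cap(-\Gamma) \subset E^{c}$, with $|\Gamma\cap\O| \ge c|\O|$ as soon as $|m|$ and $\tau$ are controlled by a small fraction of $\mathrm{diam}(\O)$. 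The main obstacle is precisely this $L^2$-to-pointwise upgrade on $\nu_S$: it is the stable-CMC, free-boundary analogue of the Bombieri--Giusti Harnack inequality (Theorem \ref{thm:HarnackBG}), and, as the paper emphasizes, no such Harnack inequality is currently available in this setting. Once part (b) is obtained, part (a) is immediate since a half-space is a cone.
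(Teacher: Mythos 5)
This is a \emph{conjecture}, not a theorem; the paper offers no proof of Conjecture~\ref{conj:twohyp.infty} and indeed says it ``may be out of reach,'' since Proposition~\ref{prop:twoimpliesone} shows part~(b) implies the KLS hyperplane conjecture. The paper's closest supporting material is the qualitative symmetry-breaking results of Sternberg--Zumbrun and Rosales et al.\ in Section~7, which only show $E \neq -E$. Your proposal, building on the stability inequality \eqref{eq:poincare-SZ} and aiming for a Bombieri--Giusti-type upgrade, is aligned with the \emph{program} the paper sketches, but it is an incomplete plan rather than a proof, as you yourself acknowledge.

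Beyond the $L^2$-to-pointwise upgrade you flag (which the paper explicitly identifies as an unavailable Harnack-type ingredient), there are at least two further gaps you appear to have underweighted. First, the averaging step gives $Q(a^*) \le \tfrac{n-1}{n}\,H^{n-1}(S)$, and since $\tfrac{n-1}{n}\to 1$ this provides \emph{no} smallness at all for large $n$; the entire weight of the argument therefore falls on the unsubstantiated ``hope'' that central symmetry and CMC rigidity force $\eta$ small uniformly in $n$, which is precisely the dimension-free content of the conjecture --- so the argument is essentially circular at that point. Second, even granting a small Lipschitz bound $|a^*\cdot\nu_S|\ge 1-\eta'$, the resulting slab thickness is controlled by $\eta'\cdot\mathrm{diam}(\O)$, not by the width of $\O$ in the direction $a^*$; if $\O$ is thin in direction $a^*$ the slab can swallow all of $\O$ and the desired volume lower bound $|\Gamma\cap\O|\ge c|\O|$ fails. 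One would need to show the optimal $a^*$ automatically aligns with a direction in which $\O$ is thick, which is again close to the substance of KLS. Your closing observation that (b) implies (a) is correct and trivial.
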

Conjecture \ref{conj:twohyp.infty} (b), which we call the {\em two hyperplane} conjecture,
says that the interface $\partial E \cap \partial E^c$ is
trapped between two hyperplanes in a very strong dimension-independent
form in the spirit of a famous conjecture of Kannan, Lov\'asz and Simonovits in theoretical computer science known as the KLS Hyperplane Conjecture.   
Their conjecture says that, under the hypotheses of Conjecture \ref{conj:twohyp.infty},
there is an absolute constant $C$ such that 
\[
\min_{a\neq 0}  H_{n-1}(\{x\in \O: a\cdot x= 0\}) \ \le \ C H_{n-1}(\O \cap \partial E).
\]
In other words, the least area hyperplane cut, dividing $\O$ in half by volume, has
area comparable to the least area among all possible ways to
divide $\O$ in half, and  the constant of comparability is independent
of dimension.  
The KLS conjecture implies a long list of famous conjectures in high dimensional convex geometry.   
Both the hyperplane and two-hyperplane conjecture say that least perimeter hypersurfaces resemble hyperplanes.

We shall see below that thanks to an argument of Emanuel Milman, Conjecture
\ref{conj:twohyp.infty} (b) implies the KLS hyperplane conjecture. 
But unlike the hyperplane conjecture, the two-hyperplane conjecture has significance even 
if we allow the constant to depend on dimension.

\begin{question} \label{quest:logconcave.twohyp}  
Consider a log-concave probability measure $\mu = e^{-V(x)}  dx$
with central symmetry.  What additional conditions on $V$ 
guarantee the following?  There is an absolute constant $b^*>0$, such that
for every isoperimetric subset $E$ with $\mu(E) = 1/2$, there is a half space $H$ 
such that 
\[
\mu(H) \ge b^*, \quad  
H \subset E,  \quad  -H \subset E^c = \R^n \setminus E.
\]
\end{question}
A more realistic request may be for a property that guarantees a half space
on one side and a convex cone on the other.

Denote weighted $(n-1)$-Hausdorff measure by $\mu_{n-1} = w H_{n-1}$.  
Here is a precise statement that the two hyperplane
conjecture implies the KLS hyperplane conjecture, with the dependence
of the constants.
\begin{prop} \label{prop:twoimpliesone} (E. Milman \cite{EMilman-personal})  
If $\mu$ is a symmetric, log-concave probability
measure and  $E$ is an isoperimetric set with $\mu(E) = 1/2$
satisfying 
\[
\mu(H) \ge b^*>0, \quad  H \subset E,  \quad  -H \subset E^c = \R^n \setminus E,
\]
and $a$ is the unit normal to the half space $H$, then
\[
P_\mu(E) = \mu_{n-1}(\partial E) \ge \frac{1}{4\log(1/b^*)} \mu_{n-1} (\{x: a \cdot x = 0\}).
\]
\end{prop}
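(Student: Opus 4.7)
My plan is to reduce the inequality to a one-dimensional estimate on the marginal of $\mu$ in the direction $a$, combined with a perimeter estimate coming from the halfspace sandwich and the divergence theorem. Let $\nu := \pi_*\mu$ be the pushforward of $\mu$ under $\pi(x)=a\cdot x$. By Pr\'ekopa--Leindler, $\nu$ is log-concave on $\R$, and by the central symmetry of $\mu$ it is symmetric with density $p(t) = \mu_{n-1}(\{a\cdot x = t\})$; in particular $p(0) = \mu_{n-1}(\{a\cdot x=0\})$ is exactly the quantity on the right-hand side. We may replace $b^*$ by $\mu(H)$ (since $1/\log(1/\cdot)$ is increasing on $(0,1)$), so that $\nu([t_0,\infty)) = b^*$.

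The first step is a sharp one-dimensional log-concavity inequality,
\[
t_0\, p(0) \;\le\; \tfrac{1}{2}\log\tfrac{1}{2b^*}.
\]
Setting $\rho := p(t_0)/p(0)\in(0,1]$, the log-linear chord bound $p(t)\ge p(0)^{1-t/t_0}p(t_0)^{t/t_0}$ on $[0,t_0]$, integrated against $\int_0^{t_0} p\,dt = 1/2-b^*$, gives $t_0\, p(0)(1-\rho)/\log(1/\rho) \le 1/2-b^*$, while the same log-linear function extrapolated to $[t_0,\infty)$ as an upper bound on the log-concave $p$ and integrated against $\int_{t_0}^\infty p\,dt = b^*$ yields $t_0\, p(t_0)\ge b^*\log(1/\rho)$. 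Cross-multiplying forces $\rho\ge 2b^*$, and substituting this back into the first inequality produces the displayed bound.

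The second step is a lower bound on $P_\mu(E)$ from the halfspace sandwich. Apply the divergence theorem to the vector field $X = aw$ on $E$:
\[
\left|\int_E a\cdot \nabla w\,dx\right| \;=\; \left|\int_{\partial E}(a\cdot \nu_E)\,w\,dH^{n-1}\right| \;\le\; P_\mu(E).
\]
By Fubini along lines in direction $a$, using $H\subset E$ and $(-H)\cap E = \emptyset$, the left side equals $\int_{a^\perp}w(y,c(y))\,dy$, where $c(y)\in[-t_0,t_0]$ is the infimum of $\{t:y+ta\in E\}$ (after reducing to the graphical case by a minimum-lifting rearrangement that preserves $\mu(E)$ and does not increase $P_\mu(E)$). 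Hence
\[
P_\mu(E) \;\ge\; \int_{a^\perp}w(y,c(y))\,dy.
\]

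The main obstacle is combining these two steps to produce the factor $1/(4\log(1/b^*))$ rather than a polynomial-in-$b^*$ loss. Along each line, log-concavity of $w(y,\cdot)$ yields the pointwise chord bound $w(y,c(y))\ge w(y,0)^{1-\tau(y)}\,w(y,\mathrm{sgn}(c(y))\,t_0)^{\tau(y)}$ with $\tau(y)=|c(y)|/t_0\in[0,1]$. Averaging by H\"older against $w(y,0)\,dy/p(0)$, combined with the ratio bound $p(t_0)/p(0)\ge 2b^*$ from the first step (which controls the $\mu$-averaged pointwise ratios) and the volume constraint $\mu(E)=1/2$ (which prevents $\tau(y)$ from saturating at $1$ too often), should give $\int w(y,c(y))\,dy \ge p(0)/(4\log(1/b^*))$. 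The delicate point is matching the pointwise log-concave chord bound to the integrated ratio so that the naive polynomial factor $(2b^*)^{\bar\tau}$ is improved to the logarithmic factor $1/\log(1/b^*)$; this is where the full isoperimetric extremality of $E$ (not merely the halfspace sandwich) should enter the argument, in the spirit of the log-loss KLS-type estimates of Lee--Vempala and Eldan.
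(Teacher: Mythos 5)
Your one-dimensional marginal estimate, $t_0\,p(0)\le\tfrac12\log\tfrac{1}{2b^*}$, is correct and is exactly equivalent to inequality \eqref{eq:1dim} in the paper evaluated at $t=t_0$; your chord/cross-multiplication proof is a legitimate alternative to the paper's pointwise exponential comparison. The problem lies in the other half of the argument, and you have in fact flagged it yourself.

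Where the paper uses the constancy of the weighted mean curvature of $\partial_r E$ (a consequence of the Euler--Lagrange equation for the isoperimetric problem) together with Gromov's Heintze--Karcher comparison to obtain the clean estimate $L\cdot\mu_{n-1}(\partial E)\ge 1/4$ on tubular neighborhoods, you attempt instead a divergence-theorem/Fubini reduction to the ``flattened'' quantity $\int_{a^\perp}w(y,c(y))\,dy$. This reduction has two genuine gaps. First, the ``minimum-lifting rearrangement'' that turns each fiber $E_y$ into a half-line while preserving $\mu(E)$ and not increasing $P_\mu(E)$ is not a standard fact for general (non-product) log-concave weights; you cannot simply invoke it, and without it the Fubini integral along $E_y$ produces a telescoping sum of boundary terms rather than the single value $w(y,c(y))$. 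Second, and more fundamentally, even granting the reduction, your chord-bound/H\"older averaging of $w(y,c(y))$ against $w(y,0)$ only yields a polynomial-in-$b^*$ loss (you illustrate this implicitly: the pointwise bound gives a factor like $(2b^*)^{\bar\tau}$), and you concede that promoting this to the logarithmic factor $1/\log(1/b^*)$ requires invoking the isoperimetric extremality of $E$ ``in the spirit of Lee--Vempala and Eldan'' --- without saying how. But that is precisely the content of the proposition. The paper's route is not merely a technical variant here: it injects the extremality through the constant weighted mean curvature of $\partial_r E$ and the Heintze--Karcher Jacobian bound (with $K=0$), which is what converts the half-space sandwich into the linear-in-$t$ tube estimate $\min(\mu(E_t\setminus E),\mu((E^c)_t\setminus E^c))\le t\,\mu_{n-1}(\partial E)$. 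Your proposal never supplies an analogue of that step, so the argument as written does not close.
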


\begin{proof}
The proof, taken from notes by E. Milman, is based on the geometric approach of \cite{EMilmanGeometricApproachPartI, EMilmanIsoperimetricBoundsOnManifolds} to proving that ``concentration implies isoperimetry" in the class of log-concave measures. 
By standard stability results (see e.g. \cite{EMilman-RoleOfConvexity}), it is enough to establish the KLS conjecture when the density $w$ is assumed strictly log-concave 
and smooth on $\R^n$.  

Recall that for a smooth  hypersurface $S \subset \R^n$
with normal $\nu$, the first variation of area of $S$ in the normal direction
is the trace of its second fundamental form at $x \in S$ and 
equals $n-1$ times the usual mean-curvature $H_S(x)$.
Following Gromov, we define the weighted mean-curvature 
$H_{S,\mu}$ as the first variation of $\mu_{n-1}$-weighted area in the normal direction.
It is easy to check  (see (3.1) of \cite{RCBM}) that 
\begin{equation}\label{eq:weightedmc}
H_{S,\mu} = (n-1)H_S - \nu \cdot \nabla V \quad  \mbox{with} \quad w = e^{-V}.
\end{equation}

Results of F. Morgan for weighted area, based on classical results of De Giorgi, Federer, Simons, Almgren, and others (see Appendix A of \cite{EMilman-RoleOfConvexity} and \cite{EMilmanGeometricApproachPartI} and the references therein) then ensure that $E$ 
has a unique open representative modulo 
Lebesgue null-sets, and $\partial E$ is the disjoint union of a closed singular part $\partial_s E$ having Hausdorff dimension at most $n-8$, and a regular part $\partial_r E$ which is a smooth 
$(n-1)$-dimensional manifold; in particular, $\mu_{n-1}(\partial_r E) = \mu_{n-1}(\partial E)$. 
Let $\nu$ denote the outer unit-normal to $E$ on $\partial_r A$. It then easily follows that 
$\partial_r E$ must have \emph{constant} weighted mean-curvature 
\begin{equation}\label{eq:w.cmc}
H_{\partial_r E,\mu}  \equiv H_*, \quad H_* \in \R
\end{equation}

The idea, going back to Gromov \cite{GromovGeneralizationOfLevy}, is to apply the Heintze-Karcher Jacobian comparison theorem to the normal map from the regular part $\partial_r E$. We will require a generalized version of this theorem, due to Morgan (see \cite{EMilmanGeometricApproachPartI}), stating that if $\Pi : S \times \R\rightarrow \R^n$ denotes the normal map $\Pi(x,r) = x + r \nu$, and assuming that $\text{Hess} V \geq K \cdot \text{Id}$, then
\[
\mu(\Pi(S \times [0,t])) \leq \int_S \int_0^t e^{H_{S,\mu}(x) s - K s^2 / 2} \, ds \,  d\mu_{n-1}(x) 
\;\;\; \mbox{for all} \ t > 0 .  
\]
In our setting, $H_{\partial_r E,\mu}$ is constant and $K=0$, and so we see that
\[
\mu(\Pi(\partial_r E \times [0,t])) \leq \mu_{n-1}(\partial E) \int_0^t e^{H_* s}\,  ds \;\;\; 
\mbox{for all} \  t > 0 . 
\]
The crucial observation by Gromov is that any point outside of $E$ will be reached by a normal ray emanating from $\partial_r E$. This follows by observing that
for every $x\in \O$ at distance $t$ from $E$, 
\[
\de B_t(x) \cap \de E  \subset \de_r E.
\]
These points of tangency satisfy an exterior ball condition and hence
are regular by the De Giorgi-Federer theory characterizing
the regular part of the boundary as those points whose tangent cone 
is a hyperplane.   
Consequently, we deduce that
\[
\mu(E_t \setminus E) \leq \mu_{n-1}(\partial E) \int_0^t  e^{H_* s}  \, ds  \;\;\; \mbox{for all} \ 
t > 0 ,
\]
where $E_t$ denotes the points of (Euclidean) distance at most $t$ from $E$. 

We now proceed as in \cite{EMilmanIsoperimetricBoundsOnManifolds}. Since changing the sign of the normal of $\partial_r E$ simply flips the sign of $H_*$, the weighted mean-curvature will be non-positive in at least one of these orientations. Hence, using either 
$E$ or its complement $E^c$ in the reasoning above, we obtain
\[
\min(\mu(E_t \setminus E) , \mu((E^c)_t \setminus E^c)) \leq t \, \mu_{n-1}(\partial E) \;\;\; 
\mbox{for all} \  t > 0 . 
\]
Note that $H \subset E$ and $-H \subset E^c$ implies that $E_L\supset (-H)^c$ and $(E^c)_L \supset H^c$, where $L$ denotes the distance between the two parallel half spaces 
$\pm H$.  By pulling the two half spaces further apart, we may assume 
that $\mu(H) = b^*/2 $ instead of $\mu(H) \ge b^*$.  Keeping
in mind that $b^* \le 1/2$, we get
\[
L \, \mu_{n-1}(\partial E) \ge 
\min(\mu(E_L \setminus E) , \mu((E^c)_L \setminus E^c))  \geq (1/2) - (b^*/2) \ge 1/4.
\]
Therefore, 
\begin{equation} \label{eq:main}
\mu_{n-1}(\partial E) \geq 
\frac{1}{ 4 L } . 
\end{equation}

Recall that $a$ is the unit normal to $H$, and 
denote the marginal by 
\[
w_a(t) = \mu_{n-1}(\{x\in \R^n: a\cdot x = t\}), \quad t\in \R.
\]
The well known theorem of Pr\'ekopa says that $w_a$ is 
log concave.   For all log concave, symmetric weights $w_a$,
\begin{equation} \label{eq:1dim}
\int_t^\infty w_a(s) \, ds \le \frac12 e^{-2w_a(0)t} \quad \mbox{for all} \quad t \ge 0.
\end{equation}
A proof can be found in  \cite{Bob}, but we will carry it out below for completeness.

Applying \eqref{eq:1dim}, we obtain
\[
b^*/2 = \mu(H) = \int_{L/2}^\infty w_a(s)\, ds \le \frac12 e^{-w_a(0)L},
\]
which implies $w_a(0)L \le \log(1/b^*)$.   Finally, applying \eqref{eq:main},
we have 
\[
\mu_{n-1}(\partial E) \ge \frac1{4L} \ge \frac{w_a(0)}{4\log(1/b^*)} = 
\frac{1}{4\log(1/b^*)} \mu_{n-1}(\{x\in \R^n: a\cdot x = 0\}).
\]
This finishes the proof, except for \eqref{eq:1dim}.

\noindent{\em Proof of \eqref{eq:1dim}}.  \ Keeping in
mind the case of equality, $w(s) = (m/2) e^{-m|s|}$, define
$h(s)$  for $s\in \R$ by $w_a(s) = e^{-h(s)}$, set $m = 2w_a(0) = 2e^{-h(0)}$,
and fix $t\ge 0$. 

\noindent
{\em Case 1.} \   $h(t) \ge h(0)  + mt$. 

By convexity of $h$, we have
\[
h(s) \ge h(0) + ms, \quad \mbox{for all} \quad  s \ge t,
\]
and hence,
\[
\int_t^\infty w_a(s) \, ds \le \int_t^\infty e^{-h(0) - ms} \, ds = \frac12 e^{-mt}.
\]

\noindent
{\em Case 2.} \  $ h(t) \le h(0)  + mt$. 

By convexity of $h$, we have 
\[
h(s) \ge h(0) + ms, \quad \mbox{for all} \quad 0 \le s \le  t.
\]
Hence,
\[
\int_0^t w_a(s) \, ds 
\ge \int_0^t e^{-h(0) - ms} \, ds = \frac12\left(1 - e^{-mt}\right),
\]
and
\[
\int_t^\infty  w_a(s) \, ds  = \frac12 - \int_0^t w_a(s) \, ds  \le \frac12 e^{-mt}.
\]
\end{proof}

The two-hyperplane conjecture may be out of reach, given
that it implies the KLS Conjecture.  But confirming or disproving any of these conjectures in low dimensions 
may still shed light on the hot spots problem and related problems
for level sets of elliptic equations.

We consulted with Larry Guth about whether variants of the
foregoing conjectures could be valid in the context of manifolds
with positive Ricci curvature.  He suggested starting with
the question  of whether there are any ``large" convex sets
at all. 

\begin{question}\label{quest:GuthRicci}  (L. Guth)
Is there a dimensional constant $c(n)>0$ such that 
every compact $n$-manifold $M$ with strictly positive Ricci curvature
has a geodesically convex subset $\G$ satisfying
 $\mbox{\rm vol}(\G) \ge c(n) \mbox{\rm vol}(M)$?
\end{question}

\section{Symmetry Breaking}

For centrally symmetric densities and general volume fractions $\a$, 
we have a conjecture that is slightly different from Conjecture \ref{conj:logconcave.hull}.
\begin{conjecture}   \label{conj:logconcave.twocone}
For $0 < \a < 1$, there is $b= b(\a)>0$ such that
for every strictly log-concave probability measure $\mu = w(x) dx$ supported on all of 
$\R^n$ with $w$ centrally symmetric, $w(-x) = w(x)$, and every isoperimetric 
subset $E$ with $\mu(E) = \a$, there is a convex set $\G$ such
that 
\[
\mu(\G) \ge b(\a),  \quad  \G \subset E,  \quad  -\G \subset E^c = \R^n \setminus E.
\]
\end{conjecture}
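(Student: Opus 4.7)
The plan is to find a canonical direction $a$ via the barycenter of $E$, use that $E$ lies mostly on the $a$-positive side and $-E$ mostly on the $a$-negative side, and extract a convex subset from the positive half-space.

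Stage 1 (Symmetry breaking). By $w(-x) = w(x)$, the reflection $-E$ is also isoperimetric with $\mu(-E) = \alpha$. The first task is to show
\[
|\vec c| \ge \eta(\alpha), \qquad \vec c := \int_E x \, d\mu(x),
\]
in Euclidean norm, for some $\eta(\alpha) > 0$. I would argue by contradiction: if $\vec c = 0$, extract from the pair $(E, -E)$ a symmetric competitor $\tilde E$ with $\mu(\tilde E) = \alpha$ and $P_\mu(\tilde E) \le P_\mu(E)$ (for instance via Pr\'ekopa-Leindler applied to the pair $(1_E, 1_{-E})$, or by a Schwarz-type origin-symmetric rearrangement). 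A symmetric isoperimetric set $\tilde E = -\tilde E$ satisfies the Sternberg-Zumbrun stability inequality \eqref{eq:poincare-SZ}, extended to log-concave densities in \cite{RCBM}, on the full subspace of odd test functions $u$---the mean-zero constraint on $\partial\tilde E$ being automatic for odd $u$ by symmetry. Testing with $u(x) = a \cdot x|_{\partial \tilde E}$ for a suitable $a \in \R^n$, and using that strict convexity of $V$ forces $\mathrm{Hess}_V(\nu,\nu)$ to be strictly positive, produces a negative direction in the quadratic form unless $\partial \tilde E$ is a hyperplane through the origin---but then $\tilde E$ would be a half-space, contradicting $\tilde E = -\tilde E$. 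A quantitative version of this rigidity yields $\eta(\alpha) > 0$.

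Stage 2 (From asymmetry to a convex subset). Set $a = \vec c/|\vec c|$. Pr\'ekopa's theorem implies that the marginal $w_a(t) := \int_{\{a\cdot x = t\}} w\, dH_{n-1}$ is log-concave and symmetric. Choose $t_\alpha > 0$ so that $H_+ := \{a\cdot x \ge t_\alpha\}$ satisfies $\mu(H_+) \ge c_1(\alpha)$ and $\mu(H_- \cap E) \le c_1(\alpha)/100$, where $H_- := -H_+$; the lower bound on $|\vec c|$ from Stage 1, combined with log-concavity of $w_a$, should produce such $t_\alpha$ via a Chebyshev-type estimate applied to the one-dimensional marginal. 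Inside $H_+$, the interface $\partial E$ has constant weighted mean curvature and small total area (controlled by the density defect), so an $\epsilon$-regularity / density-drop selection produces an open convex $\Gamma_0 \subset H_+ \cap E$ with $\mu(\Gamma_0) \ge 2b(\alpha)$. A further Vitali-type refinement---exploiting $\mu(H_- \cap E) \ll \mu(\Gamma_0)$ so the reflections of $H_- \cap E$ cannot fill up $\Gamma_0$---yields convex $\Gamma \subset \Gamma_0$ with $\mu(\Gamma) \ge b(\alpha)$ and $-\Gamma \cap E = \emptyset$, i.e., $-\Gamma \subset E^c$.

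The main obstacle, by far, is Stage 1. A dimension-free $\eta(\alpha)$ is essentially of KLS strength---Proposition \ref{prop:twoimpliesone} already shows the related two hyperplane conjecture implies the KLS Hyperplane Conjecture---so realistically I would first aim for $\eta(\alpha, n)$ using compactness and rigidity of stable constant weighted mean curvature surfaces in low dimensions. Even the dimension-dependent version must handle non-uniqueness of isoperimetric sets at fixed volume fraction: any argument must apply uniformly to every minimizer, and will likely need strict convexity of $V$ to exclude continuous families of minimizers interpolating between symmetric and asymmetric configurations. The Vitali refinement in Stage 2, while less fundamental, is also delicate because ``many thin slivers'' of $H_+ \cap E$ could a priori block all large convex subsets from avoiding the reflections of $H_-\cap E$; some additional structural input (perhaps from the NTA property of the positive and negative phases, as in Theorem \ref{thm:NTAminimal}) may be required.
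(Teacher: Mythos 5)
Note first that this statement is one of the paper's open conjectures, not a proved theorem, so there is no paper proof to compare against. What the paper does establish is the qualitative symmetry-breaking Theorem~\ref{thm:asymm}: if $E=-E$ is stationary for the weighted isoperimetric problem with strictly log-concave symmetric density, then $E$ is unstable. The author explicitly frames this, along with the Sternberg--Zumbrun and Rosales et al.\ results, as only ``the first qualitative steps pointing toward an as yet unrealized quantitative form of connectivity.''

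Evaluated on its own terms, your plan has genuine gaps at both stages. In Stage~1, the contradiction argument needs a centrally symmetric competitor $\tilde E=-\tilde E$, but the hypothesis that $E$ has zero $\mu$-barycenter is much weaker than $E=-E$: a set can be perfectly balanced at the origin without any symmetry. The proposed symmetrization is unjustified: Pr\'ekopa--Leindler applied to $1_E$ and $1_{-E}$ compares integrals of Minkowski-interpolated functions, not perimeters of sets, and Schwarz-type rearrangement is not known to be perimeter-decreasing for general strictly log-concave weights while preserving the volume constraint. Furthermore, ``a quantitative version of this rigidity yields $\eta(\a)>0$'' is precisely the heart of the conjecture, not something that follows from the qualitative instability of $E=-E$; you cannot invoke it by fiat.

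In Stage~2, the $\epsilon$-regularity/density-drop step conflates local flatness of the interface with convexity of the phase. $\epsilon$-regularity gives $C^{1,\alpha}$ regularity of $\de E$ on small balls where the excess is small; it provides no mechanism for producing a convex subset $\G_0\subset H_+\cap E$ of definite $\mu$-measure. Asserting such a $\G_0$ exists is essentially Conjecture~\ref{conj:logconcave.hull} in disguise, i.e.\ a statement of the same difficulty you are trying to prove. You are also right to flag the Vitali step as a real obstacle: a set of tiny $\mu$-measure can nonetheless have reflections that cut every large convex subset of a non-convex region, and Theorem~\ref{thm:NTAminimal} applies to area minimizers in $\R^n$ (not to weighted isoperimetric sets for log-concave densities), and even in that setting does not obviously give the avoidance you need. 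The outline is a reasonable roadmap, but every key step is currently a restatement of the missing quantitative input rather than a derivation of it.
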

Conjecture \ref{conj:logconcave.twocone} rules out $E = -E$.
Likewise a positive answer to Question \ref{quest:logconcave.twohyp} rules
out $E = -E$.  Results, as opposed to conjectures, of Sternberg and Zumbrun and
of Rosales et al, establish this symmetry breaking.

\begin{theorem}\label{thm:asymmStZ} (Sternberg and Zumbrun \cite{StZ3})
Suppose that $\O\subset \R^n$ is a smooth, strictly convex domain in
the sense that the second fundamental form of $\de \O$ is
strictly positive on every tangent vector to $\de \O$, and suppose that
$E$ 
is a subset of $\O$ that is stationary for the isoperimetric
problem.  If $-\O = \O$ and $E = -E$, then 
$E$ is unstable. 
\end{theorem}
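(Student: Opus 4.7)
\emph{Plan.} The strategy is to produce, by exploiting central symmetry, a rich family of Jacobi test functions $f_e(x) := e \cdot \nu_S(x)$ indexed by unit vectors $e \in \R^n$, and to show that the average of the stability quadratic form \eqref{eq:poincare-SZ} over $e \in S^{n-1}$ is strictly negative. This forces some individual $f_e$ to violate stability.

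\emph{Setup and admissibility.} Let $S := \O \cap \de E$, with $\nu_S$ the outward unit normal to $E$. Because $E = -E$, the involution $x \mapsto -x$ maps $S$ to itself and reverses the outward normal, so $\nu_S(-x) = -\nu_S(x)$. Hence for every unit vector $e$, the function $f_e$ is odd on the symmetric set $S = -S$, and $\int_S f_e \, dH^{n-1} = 0$, so $f_e$ is admissible in \eqref{eq:poincare-SZ}. Translation by $te$ is an ambient Euclidean isometry, and the normal component of this Killing field is a Jacobi field of the CMC hypersurface $S$:
\[
\D_S f_e + |A_S|^2 f_e = 0 \quad \text{on } S.
\]
Integration by parts combined with this equation gives $\int_S |\del_S f_e|^2 = \int_S |A_S|^2 f_e^2 + \int_{\de S} f_e\, \de_\eta f_e$, where $\eta$ is the outward conormal to $S$ along $\de S = \overline S \cap \de \O$. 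The quadratic form in \eqref{eq:poincare-SZ} therefore collapses to
\[
Q(f_e, f_e) \;=\; \int_{\de S}\bigl[f_e\, \de_\eta f_e - A_{\de \O}(\nu_S, \nu_S)\, f_e^2\bigr] dH^{n-2},
\]
which stability requires to be $\ge 0$ for every $e$.

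\emph{Pointwise boundary identity and averaging over $e$.} Stationarity forces $S \perp \de \O$, so on $\de S$ one has $\eta = \nu_{\de \O}$ and $\nu_S \in T\de \O$. Pick an orthonormal frame $(e_1, \dots, e_{n-2}, e_{n-1} := \nu_S, e_n := \nu_{\de \O})$ at $x \in \de S$ and write $e = \sum_i a_i e_i$. Differentiating the identity $\nu_S \cdot \nu_{\de \O} \equiv 0$ along $\de S$ yields the compatibility relation $A_S(e_i, e_n) = -A_{\de \O}(e_i, e_{n-1})$ for $i \le n-2$, and the Weingarten formula for $\nu_S$ in the direction $\eta = e_n$ then gives
\[
f_e\, \de_\eta f_e - A_{\de \O}(\nu_S, \nu_S)\, f_e^2 \;=\; a_{n-1}\sum_{i=1}^{n-2} a_i A_{\de \O}(e_i, e_{n-1}) \,-\, a_{n-1} a_n A_S(e_n, e_n) \,-\, a_{n-1}^2 A_{\de \O}(e_{n-1}, e_{n-1}).
\]
Now average over $e$ uniform on $S^{n-1}$: since $\overline{a_i a_j} = \frac{1}{n}\delta_{ij}$, every cross-term vanishes and only the diagonal piece survives,
\[
\overline{f_e\, \de_\eta f_e - A_{\de \O}(\nu_S, \nu_S)\, f_e^2} \;=\; -\frac{1}{n}\, A_{\de \O}(\nu_S, \nu_S).
\]
Interchanging the $e$-average with the integral over $\de S$,
\[
0 \;\le\; \overline{Q(f_e, f_e)} \;=\; -\frac{1}{n}\int_{\de S} A_{\de \O}(\nu_S, \nu_S)\, dH^{n-2}.
\]
Strict convexity of $\de \O$ makes $A_{\de \O}(\nu_S, \nu_S) > 0$ pointwise on $\de S$, so the right-hand side is strictly negative. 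This contradiction shows that some direction $e$ realizes $Q(f_e, f_e) < 0$, and $E$ is unstable.

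\emph{Principal obstacle and caveat.} The only substantive calculation is the pointwise boundary identity above; its derivation from the Weingarten maps of $S$ and $\de \O$ together with the $S \perp \de \O$ compatibility is elementary but must be executed carefully. Crucially, no single $e$ yields a clean sign: the averaging over $S^{n-1}$ is precisely what kills the extraneous $A_S(\nu_{\de \O}, \nu_{\de \O})$ term (the $\eta$-normal curvature of $S$) and the off-diagonal contributions. A secondary caveat is that the argument requires $\de S \ne \emptyset$; a closed CMC component sitting inside $\O$ (by Alexandrov, a union of spheres) falls outside this scope, but is in any case excluded for genuine isoperimetric competitors in bounded convex domains by the non-relative-compactness remark recalled in the introduction.
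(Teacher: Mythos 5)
Your proof is correct, and it arrives at exactly the contradiction the paper is after, but the paper itself does not re-prove Theorem~\ref{thm:asymmStZ}; it cites Sternberg--Zumbrun and instead proves the log-concave variant, Theorem~\ref{thm:asymm}, by a closely related but noticeably shorter route. The paper's proof tests the stability inequality against the same translation functions $f_j = e_j\cdot\nu_S$, invokes the admissibility $\int_S f_j\,dH^{n-1}=0$ from central symmetry exactly as you do, and then simply \emph{sums} the stability inequalities over an orthonormal basis using the purely algebraic identity \eqref{eq:Aidentity}, $\sum_j |\nabla_S f_j|^2 = |A_S|^2$ and $\sum_j f_j^2 = 1$, to cancel the interior terms and leave $-\int_{\de S} A_{\de\O}(\nu_S,\nu_S)\,dH^{n-2} \ge 0$, contradicting strict convexity. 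Your route achieves the same cancellation indirectly: you invoke the CMC Jacobi equation $\Delta_S f_e + |A_S|^2 f_e = 0$, integrate by parts to push everything to $\de S$, carry out a frame computation on $\de S$ using the $S\perp\de\O$ compatibility relation, and then average over $e \in S^{n-1}$ (which is the same as summing over a basis and dividing by $n$). Both methods are sound and lead to the same inequality, but note what each requires: the algebraic identity \eqref{eq:Aidentity} is pointwise and valid for any hypersurface, so the paper's summation never needs the Euler--Lagrange (constant mean curvature) equation, whereas your IBP step genuinely uses that $f_e$ is a Jacobi field, i.e.\ that $S$ is CMC. What your argument buys in exchange is a transparent boundary-localized picture: the instability is manifestly generated by the boundary curvature $A_{\de\O}(\nu_S,\nu_S)$ concentrated on $\de S$, with all interior contributions cancelling identically. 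Your caveat about $\de S = \emptyset$ applies equally to the paper's argument (summing there gives $0 \le 0$, no contradiction), and it is indeed an implicit hypothesis in both; you were right to flag it.
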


Sternberg and Zumbrun were interested in particular in the case of the Simons
cone $S = \{(x,y)\in \R^n\times \R^n: |x| = |y|\}$.  Let 
\[
E  = \{(x,y): |x|<|y|, \ |x|^2 + |y|^2<1\} 
\subset B = \{(x,y): |x|^2 + |y|^2<1\}.
\]
Then $B\cap \de E = B\cap S$ and $E$ has mean curvature zero.
Thus it is stationary for the least area and isoperimetric problem in B. 
Moreover, as Simons famously observed,  it is stable
under pertubations that fix the boundary $S\cap \de B$ when $n\ge 4$. 
Nevertheless, by Theorem \ref{thm:asymmStZ}, $E$ is unstable
for the isoperimetric problem.  Indeed, the perturbation that
works is just a translation suitably modified so that the perturbation
divides $B$ exactly in half.   

The Simons cone provides the first potential barrier to proving 
our very first qualitative conjecture Conjecture \ref{conj:hull} (a).
Define
\[
\O : =\mbox{hull}(S\cap B); \quad E_1 := \{(x,y)\in\O: |x|>|y|\}.
\]
Then $S$ divides $\O$ in half and $E_1$ is stationary for the isoperimetric
problem in $\O$.  Theorem \ref{thm:asymmStZ} does not apply
because the boundary of $\O$ is not smooth and not strictly
convex. But the curvature is singular and the points where $S\cap\de \O$,
so in some sense the curvature is very large at the points that matter.
In fact, an explicit calculation with translations shows that
$E_1$ is unstable for the isoperimetric problem in $\O$.  In
other words, the Simons cone does not disprove Conjecture 
\ref{conj:hull} (a).

The variant of the symmetry breaking theorem of Sternberg and Zumbrun for 
log-concave densities is as follows.   

\begin{theorem}\label{thm:asymm}  Let $d\mu = e^{-V} dx$ be a
probability measure that is strictly log-concave and symmetric: $\nabla^2 V >>0$
and $V(-x) = V(x)$.  Suppose that $E$ is a stationary set for the isoperimetric 
problem and $E = -E$.  Then $E$ is unstable.   
\end{theorem}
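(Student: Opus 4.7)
The plan is to adapt the Sternberg--Zumbrun test-function strategy to the weighted setting, and let the central symmetry $E=-E$ provide admissibility for free. By Bayle's second-variation formula for weighted perimeter (as used in Rosales et al.\ \cite{RCBM}), stationarity of $E$ means the regular part of $S=\de E$ has constant weighted mean curvature, and stability of $E$ is the statement that
\[
Q(\f) \; := \; \int_S |\del_S \f|^2\, d\mu_{n-1} \; - \; \int_S \bigl(|A_S|^2 + \mathrm{Hess}\, V(\nu,\nu)\bigr)\f^2\, d\mu_{n-1} \; \ge \; 0
\]
for every smooth $\f$ compactly supported on the regular part of $S$ satisfying the first-order volume constraint $\int_S \f\, d\mu_{n-1}=0$, where $\nu$ is the outer unit normal and $\mu_{n-1}=wH_{n-1}$. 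I will exhibit an $n$-tuple of admissible test functions whose second variations sum to a strictly negative quantity.

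Take $\f_i(x):=\chi(x)\,(e_i\cdot\nu(x))$ for $i=1,\ldots,n$, where $\{e_i\}$ is the standard basis of $\R^n$ and $\chi$ is a smooth even radial cutoff. Admissibility is automatic: $E=-E$ forces $\nu(-x)=-\nu(x)$ and $V$ even forces $w$ even, so each $\f_i$ is odd under $x\mapsto -x$ and hence has zero integral against the symmetric measure $\mu_{n-1}$. The heart of the argument is a direct computation: using $\del_{e_\a}\nu = -h_{\a\b} e_\b$ in an orthonormal tangential frame $\{e_\a\}$ on $S$, one checks the pointwise identities
\[
\sum_{i=1}^n (e_i\cdot\nu)^2 = 1, \qquad \sum_{i=1}^n |\del_S(e_i\cdot\nu)|^2 = |A_S|^2, \qquad \sum_{i=1}^n (e_i\cdot\nu)\,\del_S(e_i\cdot\nu) = \tfrac12 \del_S|\nu|^2 = 0.
\]
Expanding $|\del_S \f_i|^2$ by the product rule and summing on $i$, the cross term vanishes by the third identity and the $|A_S|^2$ contributions cancel, leaving
\[
\sum_{i=1}^n Q(\f_i) \; = \; \int_S |\del_S \chi|^2\, d\mu_{n-1} \; - \; \int_S \chi^2\, \mathrm{Hess}\, V(\nu,\nu)\, d\mu_{n-1}.
\]

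Choosing $\chi \equiv 1$ on $B_R$, supported in $B_{2R}$, with $|\del \chi|=O(1/R)$, the first term is bounded by $C\mu_{n-1}(S)/R^2$ and tends to $0$ as $R\to\infty$ (the weighted perimeter is finite because $\mu$ is a probability measure and $E$ has finite perimeter), while by monotone convergence the second term tends up to $\int_S \mathrm{Hess}\, V(\nu,\nu)\, d\mu_{n-1}$, which is strictly positive (possibly $+\infty$) since strict convexity of $V$ gives $\mathrm{Hess}\, V(\nu,\nu)>0$ at every point of $S$. Hence $\sum_i Q(\f_i)<0$ for $R$ sufficiently large, at least one $Q(\f_i)<0$, and $E$ is unstable.

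The main technical obstacle I anticipate is the singular set of $S$, of Hausdorff codimension at least $7$ in $S$ by F. Morgan's weighted regularity theory: a further cutoff supported away from it is needed, and one has to run a capacity-type argument, in the spirit of \cite{BG}, to show its contribution to $\int_S |\del_S \chi|^2\, d\mu_{n-1}$ is negligible. The symmetry hypothesis $E=-E$ is essential to the argument, since without it the quantities $\int_S (e_i\cdot\nu)\, d\mu_{n-1}$ are generally nonzero and one would have to subtract off mean values, destroying the clean algebraic collapse that gave the identity above.
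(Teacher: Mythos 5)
Your proposal is essentially the same proof the paper gives: same second-variation stability inequality, same test functions $f_i = e_i\cdot\nu$ with admissibility coming from $E=-E$, the same identities $\sum f_i^2 = 1$ and $\sum|\nabla_S f_i|^2 = |A_S|^2$ from Sternberg--Zumbrun, and the same conclusion $\int_S \mathrm{Hess}\,V(\nu,\nu)\, d\mu_{n-1} \le 0$ contradicting strict convexity. The only difference is that you insert a cutoff $\chi$ and a capacity argument near the singular set, a technical refinement that the paper's proof passes over silently by plugging $e_i\cdot\nu$ directly into the stability inequality.
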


\noindent
{\bf Proof.}   Note that the argument below was also carried
out by Rosales et al \cite{RCBM} in the special case that $w$ 
is radial and $E$ is a ball in order to rule out balls
as  isoperimetric sets for strictly log-concave radial densities.
The present version relies, in addition, on the identity \eqref{eq:Aidentity}
stated below and used by Sternberg and Zumbrun.

V. Bayle calculated the second variation and hence
the condition for stability. Namely, 
with $w = e^{-V}$, $S = \de E$, and $\nu$ the unit normal to $S$, stability
means 
\[
\int_{S} [|A_S|^2 + \nabla^2 V(\nu ,\nu)]f^2 \, w\, dH^{n-1}
\le \int_{S} |\nabla_S f|^2 \, w\, dH^{n-1}
\]
for all $f\in C^1(S)$ such that 
\[
\int_{S} f \, w \, dH^{n-1} = 0.
\]
Let $e_j$ be an orthonormal basis for $R^n$, 
and define test functions $f_j(x) = e_j\cdot \nu(x)$, $j=1,\ 2,\, \dots, \, n$.  
Then we have 
\[
\int_{S} f_j \, w\, dH^{n-1} = 0, \quad j = 1, \ 2, \ \dots\, , \, n.
\]
because $w(x) = w(-x)$, $-S = S$ and $\nu(-x) = -\nu(x)$.  Furthermore,
a direct calculation (see \cite{StZ3}) yields
\begin{equation}\label{eq:Aidentity}
\sum_{j=1}^n |\nabla_S f_j|^2 = |A_S|^2, \quad \sum_{j=1}^n f_j^2 = 1. 
\end{equation}
If $E$ were stable we could add the stability inequalities for $f_j$ for all $j$
and obtain
\[
\int_S \nabla^2 V(\nu,\nu) \, w \, dH^{n-1} \le 0.
\]
This contradicts $\nabla^2 V >>0$.

Thus the isoperimetric sets in these symmetric settings
break the symmetry. These results can
be viewed as the first qualitative steps pointing toward
an  as yet unrealized quantitative form
of connectivity of isoperimetric sets at the largest scale.
The hope is that such large scale estimates are valid and that,
with the help of scale-invariant connectivity estimates
described earlier, they lead via deformation arguments to 
full regularity in all dimensions of isoperimetric interfaces.

An analogous, qualitative symmetry breaking theorem for
Neumann eigenfunctions was proved by  B. Klartag (see Corollary 1 \cite{Kl}).
This is the first qualitative evidence in favor of our conjectured
monotonicity of the first non-constant Neumann eigenfunction in
symmetric convex domains.

\end{document}